\documentclass[11pt]{amsart}
\usepackage[margin=1.1in]{geometry}
\usepackage{amsmath,amssymb,amsthm,mathtools,mathrsfs}
\usepackage{enumitem}
\usepackage[dvipsnames]{xcolor}
\usepackage{hyperref}
\hypersetup{colorlinks=true,linkcolor=blue,citecolor=blue,urlcolor=blue}
\usepackage{microtype}
\newtheorem{theorem}{Theorem}[section]
\newtheorem{proposition}[theorem]{Proposition}
\newtheorem{lemma}[theorem]{Lemma}

\theoremstyle{definition}
\newtheorem{remark}[theorem]{Remark}

\newtheorem{example}[theorem]{Example}

\DeclareMathOperator{\rank}{rank}
\DeclareMathOperator{\Span}{Span}
\DeclareMathOperator{\Stab}{Stab}
\newcommand{\C}{\mathbb C}
\newcommand{\R}{\mathbb R}
\newcommand{\Z}{\mathbb Z}
\newcommand{\Kpi}{\ensuremath{K(\pi,1)}}

\title[Non-asphericity of strata of differentials]{Non-asphericity of strata of genus-one differentials and stability spaces}

\author{Dawei Chen}
\address{Department of Mathematics, Boston College, Chestnut Hill, MA 02467, USA}
\email{dawei.chen@bc.edu}

\author{Jingyin Huang}
\address{Department of Mathematics, The Ohio State University,
231 W. 18th Ave, Columbus, OH 43210}
\email{huang.929@osu.edu}

\author{Yu Qiu}
\address{Yau Mathematical Sciences Center and Department of Mathematical Sciences, Tsinghua University, 100084 Beijing, China \& Beijing Institute of Mathematical Sciences
and Applications, Yanqi Lake, Beijing, China}
\email{yu.qiu@bath.edu}

\author{Fei Yu}
\address{School of Mathematical Sciences, Zhejiang University, Hangzhou, 310058, China}
\email{yufei@zju.edu.cn}

\date{\today}

\subjclass[2020]{Primary 14H10, 55P20; Secondary 52C35, 18G80}
\keywords{strata of differentials, genus-one differentials, \Kpi, arrangement complements, abelian arrangements, Bridgeland stability conditions}

\begin{document}

\begin{abstract}
We show that when the number of zeros or poles is at least four, every connected component of the strata of differentials in genus one with prescribed zero and pole orders is not an orbifold $K(\pi,1)$. For quadratic differentials, this provides infinitely many counterexamples to a conjecture attributed to Kontsevich, as well as to a folklore conjecture concerning the contractibility of spaces of Bridgeland stability conditions.
\end{abstract}
\maketitle

\section{Introduction}
\label{sec:intro}

\subsection{Strata of differentials and the \texorpdfstring{$K(\pi,1)$}{K(pi,1)}-problem}

Holomorphic differentials on Riemann surfaces correspond to translation surfaces with conical singularities, where the zero orders of the differentials determine the associated cone angles. Similarly, meromorphic differentials and $k$-differentials correspond to translation surfaces of infinite area and $\frac{1}{k}$-translation structures, respectively. The loci of differentials with prescribed orders of zeros and poles stratify the moduli spaces of differentials. Up to scalar multiplication, $k$-differentials correspond to pluricanonical divisors, which govern the intrinsic geometry of the underlying complex algebraic curves. These interrelated yet differently motivated viewpoints make the study of differentials a central topic at the intersection of many research fields.

Over the past few decades, remarkable progress on differentials and their moduli spaces has been achieved in diverse directions, including dynamical invariants of translation surfaces, the classification of linear subvarieties, compactifications of strata of differentials, intersection-number and cycle-class computations, and applications to birational geometry. We refer the reader to \cite{Zo06, Mo13, Wr15, Ch17, Ma18, BCGGMk, Fi24, AM24, D25, BC25, CM26, CY26} for introductions and recent surveys.

The strata of differentials with prescribed orders of zeros and poles are not always connected. Following a series of works \cite{KZ03, La04H, La04S, La08, CM14, Bo15, CG22, AA26}, connected components of strata of differentials are now classified in all cases. Nevertheless, the topology of these connected components remains poorly understood. In \cite[Section 4]{KZ97}, Kontsevich and Zorich conjectured that every connected component of every stratum of holomorphic differentials has the homotopy type of a $K(\pi,1)$, with orbifold fundamental group commensurable with a mapping class group. A version including strata of quadratic differentials was attributed to Kontsevich in \cite[Section 1.1.2]{La08}. It is also natural to ask the same question for strata of meromorphic and $k$-differentials.

There are several motivations behind the $K(\pi,1)$-conjecture for strata of differentials. First, the moduli space $\mathcal M_{g,n}$ of genus $g$ curves with
$n$ marked points is a $K(\pi,1)$, as its orbifold universal cover is the Teichm\"uller space $\mathcal T_{g,n}$, which is contractible, and the orbifold fundamental group of $\mathcal M_{g,n}$ is precisely the mapping class group.

Second, strata of holomorphic differentials parameterize smooth deformations in the miniversal deformation spaces of certain Gorenstein curve singularities \cite{B24, CY25}. A parallel $K(\pi,1)$-problem in singularity theory concerns discriminant complements, equivalently smooth deformation spaces, in versal deformation bases of isolated complete intersection singularities. This problem is often attributed to K. Saito; see \cite[p.~26]{Arn93} and \cite[p.~185]{Lo84}. The $ADE$-singularity cases were established in the classical work of Brieskorn and Deligne \cite{Br73, De72}; see also \cite[Chapter~9]{Lo84}. Further positive cases include Wirthm\"uller's work for $S_k$-type singularities \cite{Wi86}, and the first and fourth authors' work concerning the $U_7,U_8,U_9$ simple space-curve singularities \cite{CY25}.
Combining these viewpoints and results, it follows that the hyperelliptic components and a number of low-genus components of strata of differentials are $K(\pi,1)$ \cite{LM14, Gi24, Gi25, CY25}.

When there are sufficiently many simple zeros, the fundamental group and low-degree cohomology groups of the strata can also be described \cite{KQ20,Qi25, Sa25, CL26, T26}.

Other than the aforementioned cases, the $K(\pi,1)$-conjecture and the homotopy groups of strata of differentials remain largely unknown. In this paper, we consider the strata of differentials in genus one. Our main result below shows that when the number of zeros or poles is at least four, every connected component of the strata of differentials in genus one is not a $K(\pi,1)$. This provides infinitely many counterexamples to the $K(\pi,1)$-conjecture for strata of differentials.

We first set up some notation and preliminaries. Let
\[
 \mu=(m_1,\ldots,m_s; -n_1,\ldots,-n_t),\qquad m_i,n_j>0,
\]
be a tuple of nonzero integers such that
\[
 \sum_{i=1}^s m_i=\sum_{j=1}^t n_j.
\]
Since the canonical bundle is trivial on an elliptic curve, the strata of genus-one $k$-differentials of type $\mu$ are isomorphic for all $k$. We thus denote by
\[
 \mathcal P_1(\mu)
\]
the \emph{projectivized stratum of genus-one $k$-differentials of type $\mu$} for all $k$, i.e.\ the space of principal divisors of type
\[
m_1 z_1 + \cdots + m_s z_s - n_1 p_1 -\cdots - n_t p_t
\]
on elliptic curves, where all zeros $z_i$ and poles $p_j$ are pairwise distinct. Similarly, we write $\mathcal H_1(\mu)$ for the corresponding \emph{unprojectivized} stratum of genus-one $k$-differentials of type $\mu$, again independent of $k$. Since the $K(\pi,1)$-property is preserved under finite covers, we label all zeros and poles as marked points. Moreover, note that
\[\mathcal P_1(\mu)\cong \mathcal P_1(-\mu).\]
Hence, we can reverse the zeros and poles if needed.

It is easy to see that $\mathcal P_1(\mu)$ is nonempty for all $\mu \neq (1,-1)$. Hence, from now on we always assume that $\mu \neq (1,-1)$. Write
\[
 d_\mu=\gcd(m_1,\ldots,m_s,n_1,\ldots,n_t).
\]
The connected components of $\mathcal P_1(\mu)$ are classified by the \emph{torsion number} $e$, where $e$ is a positive divisor of $d_\mu$. More precisely, if $e$ is the largest divisor of $d_\mu$ such that
\[
  (m_1/e) z_1 + \cdots + (m_s/e) z_s - (n_1/e) p_1 -\cdots - (n_t/e) p_t \sim 0,
\]
then the corresponding locus in $\mathcal P_1(\mu)$ forms a connected component with torsion number $e$. We refer the reader to \cite[Section 3.2]{CC14}, \cite[Section 4]{Bo15}, \cite[Section 3]{T18},
 and \cite[Section 3.4]{CG22} for more details about the torsion number as well as its flat-geometric interpretation.

For our purpose, it is convenient to consider
the \emph{primitive tuple}
\[
 \bar\mu=(\bar m_1,\ldots,\bar m_s;-\bar n_1,\ldots,-\bar n_t),
\]
where $\bar m_i = m_i/d_{\mu}$ and $\bar n_j = n_j/d_{\mu}$. Then connected components of strata of genus-one differentials can be recorded in the notation
\[
  \mathcal P_1(d\bar \mu)
\]
for $d>0$ and $\bar\mu$ primitive, except that for $d=1$ and $\bar \mu = (1,-1)$ the component is empty, which has already been excluded by our preceding assumption. Here the integer $d$ records the exact order of the primitive Abel divisor. For example, $\mathcal P_1(6(3;-2,-1))$ denotes the component where $3z_1-2p_1-p_2$ has exact order $6$. We say that connected components of type $\mathcal P_1(\bar \mu)$ are \emph{primitive}.

\begin{theorem}
\label{thm:main}
If $s\ge 4$ or $t\ge 4$,
then every primitive connected component $\mathcal P_1(\bar \mu)$ of projectivized strata of genus-one differentials is not a \Kpi.

If $s\ge 3$ or $t\ge 3$,
then every non-primitive connected component $\mathcal P_1(d\bar \mu)$ with $d\ge 2$ is not a \Kpi.
\end{theorem}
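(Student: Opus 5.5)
The plan is to reduce to a statement about arrangement complements and then use a local obstruction. Write $n=s+t$ and $a=(m_1,\dots,m_s,-n_1,\dots,-n_t)$, so that $\sum a_i=0$. Fix an elliptic curve $E=\C/\Lambda$. The fibre of $\mathcal P_1(\mu)\to\mathcal M_{1,1}$ over $E$ is the set of $x\in E^n/E$ (quotient by diagonal translation) with $\sum a_i x_i=0$ and all $x_i$ pairwise distinct. Since $\mathcal M_{1,1}$ is an orbifold $K(\pi,1)$ with contractible orbifold universal cover $\mathbb H$, the long exact homotopy sequence shows that a component is a $K(\pi,1)$ if and only if the corresponding fibre component is aspherical.

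For $\bar\mu$ primitive, the kernel of $x\mapsto\sum a_ix_i$ on $E^n/E$ is a connected abelian variety $T\cong E^{n-2}$. The fibre is $T$ minus the elliptic arrangement of diagonals $\{x_i=x_j\}$. For $d\ge2$, the component $\mathcal P_1(d\bar\mu)$ corresponds to the union of cosets of the kernel of $\sum\bar a_ix_i$ on which this form takes values of exact order $d$. Each such coset is again an $E^{n-2}$ with the induced diagonal arrangement. Pulling back along the universal cover $\C^{n-2}\to T$, a $\Z^{2(n-2)}$-Galois covering, turns the complement into the complement of a locally finite, periodic affine hyperplane arrangement in $\C^{n-2}$. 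Asphericity is unchanged under coverings, so it suffices to show that this affine complement is not aspherical.

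The key tool is localization. For a point $q$ in the ambient space, the complement of the central subarrangement $\mathcal A_q$ of hyperplanes through $q$, intersected with a small ball, is a homotopy retract of the global complement. Hence $\pi_k$ of the local complement injects, and it suffices to find one $q$ whose local central arrangement is not $K(\pi,1)$.

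In the primitive case, take $q$ to be the total collision point $x_1=\dots=x_n$, which lies in the fibre because $\sum a_i=0$. There the local arrangement is the braid arrangement $A_{n-1}$ on $\C^n/\C\cdot(1,\dots,1)$ restricted to the hyperplane $H_a=\{\sum a_iy_i=0\}$. To show this restriction is not $K(\pi,1)$ I would argue as follows.
\begin{enumerate}[label=(\roman*)]
\item When $H_a$ is sufficiently generic with respect to the low-codimension flats of $A_{n-1}$, a Zariski--Lefschetz/Hamm--L\^e argument shows that $\pi_1$ of the section equals $\pi_1$ of the braid arrangement complement, i.e.\ the pure braid group $P_n$.
\item If the section were aspherical, its cohomology would therefore equal $H^*(P_n)$. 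But its cohomological dimension is only $n-2<n-1=\mathrm{cd}(P_n)$, since its Poincar\'e polynomial is a truncation of that of $A_{n-1}$.
\item For this to be a genuine contradiction one needs rank at least $3$, i.e.\ $n-2\ge3$ (rank-two central arrangements are always aspherical).
\end{enumerate}
The genericity needed in (i) amounts to no proper subset sum $\sum_{i\in S}a_i$ vanishing in a way that makes $H_a$ contain a low-codimension flat.

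This genericity is where I expect the real difficulty, and presumably where the hypothesis $s\ge4$ or $t\ge4$ enters. When genericity fails at the total collision point, I would choose instead a partial collision point. There, a cluster of at least four zeros (or, using $\mathcal P_1(\mu)\cong\mathcal P_1(-\mu)$, four poles) collides, and the remaining points are positioned so that the linear constraint restricts nontrivially to that cluster. The local arrangement is then $A_k$ cut by a hyperplane with all coefficients of one sign. Such a hyperplane avoids all diagonal flats, so the argument above applies with rank $\ge3$.

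For $d\ge2$, total collision is not in the component, because there $\sum\bar a_ix_i=0$ rather than a point of exact order $d$. So I would pick a point where the points fall into clusters placed at distinct translates by $d$-torsion points, arranged so that the constraint forces the sum to have exact order $d$. A cluster containing three zeros (or three poles), together with the freedom in the torsion offsets, should produce a local arrangement of rank $\ge3$ of the same non-aspherical generic-section type, which explains the weaker bound $s\ge3$ or $t\ge3$. Verifying the genericity and rank count in this torsion configuration is the step I expect to be the main obstacle.
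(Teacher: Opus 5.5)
\textbf{Gap in the non-primitive case.} Your plan for $d\ge2$ cannot succeed, because no tangent localization \emph{inside} a nonzero torsion fiber is an obstruction. On a collision stratum with blocks $B_\alpha$, the Abel map restricts to $\sum_\alpha W_\alpha y_\alpha$ with $W_\alpha=\sum_{i\in B_\alpha}\bar a_i$.
\begin{itemize}
\item If every $W_\alpha=0$, the stratum lies in the zero coset and never meets a coset of exact order $d\ge2$.
\item Otherwise the restriction is a submersion. The torsion coset is then transverse to the stratum, and the tangent localization there is just the localization of the braid arrangement. That is a product of braid arrangements, which is fiber-type and hence \Kpi.
\end{itemize}
Placing clusters at torsion translates adds only discrete data, not normal directions.

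The missing idea is to leave the fiber. Take one marked point as origin, and let $w=(w_1,\dots,w_N)$ be the remaining coefficients. The Abel map $\Phi_w\colon E^N\setminus(\Delta\cup\Phi_w^{-1}(0))\to E\setminus\{0\}$ is a locally trivial fibration; this follows from Thom's isotopy lemma, using exactly the submersion property above. Its base is aspherical, so an aspherical torsion fiber would force the total space to be aspherical. In the total space the origin \emph{is} a layer. Its tangent localization is the marked braid arrangement together with the additional hyperplane $\{h_w=0\}$, where $h_w=\sum w_ix_i$. This is the restriction of $\mathcal A_N\times\{u=0\}\subset\C^{N+1}$ to the graph hyperplane $\{u=h_w\}$. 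That restriction is $L_2$-generic as soon as one side of $w$ has at least three entries. Indeed, a contained flat of rank at most $3$ has the form $X\times\{0\}$ with $\operatorname{rank}X\le2$. It therefore needs at least $N-2$ zero-weight blocks, each containing an entry of the opposite side, which has at most $N-3$ entries. The extra coordinate $u$ is what lowers the threshold from $4$ to $3$. The Zariski-section and cohomological-dimension argument then applies in $\C^{N+1}$.

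\textbf{Primitive case.} Your main line is the paper's: the total collision point, Zariski's theorem identifying $\pi_1$ of the section with $P_n$, and cohomological dimension. However, the genericity you single out as the real difficulty is automatic, and your fallback would not work.

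A flat of $A_{n-1}$ of rank at most $3$ is a partition of the $n$ points into at least $n-3$ blocks. It lies in $H_a$ exactly when every block has total weight zero. Each such block needs a zero and a pole, so there are at most $\min(s,t)\le n-4$ blocks when $\max(s,t)\ge4$, a contradiction. So the hypothesis enters through this pigeonhole count, not through arithmetic of subset sums.

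A partial collision of a one-sign cluster never yields a cut localization. That cluster is a block of nonzero weight, so by the transversality above the local arrangement is only $A_{k-1}$ (or a product of braid arrangements), which is \Kpi. The Abel hyperplane cuts a localization only at layers whose collision partition has all blocks of weight zero.
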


The same conclusion applies to the unprojectivized strata $\mathcal H_1(d\bar\mu)$ of genus-one differentials. Indeed, $\mathcal H_1(d\bar\mu)\to \mathcal P_1(d\bar\mu)$ is a $\C^*$-bundle. We use the following form of the homotopy long exact sequence for a locally trivial fibration $F\hookrightarrow E\to B$:
\begin{equation}\label{eq:fibration-les}
 \cdots\to \pi_i(F)\to \pi_i(E)\to \pi_i(B)
 \to \pi_{i-1}(F)\to\cdots.
\end{equation}
If $\mathcal H_1(d\bar\mu)$ were a \Kpi, then \eqref{eq:fibration-les} would imply that $\pi_i(\mathcal P_1(d\bar\mu))=0$ for $i\ge3$, and that $\pi_2(\mathcal P_1(d\bar\mu))$ injects into $\pi_1(\C^*)\cong\Z$. Since the strata have finite-dimensional CW type, a nonzero $\pi_2$ would make the universal cover of $\mathcal P_1(d\bar\mu)$ a finite-dimensional CW model for $K(\Z,2)$, which is homotopy equivalent to $\mathbb C\mathbb P^{\infty}$ and has nontrivial cohomology in arbitrarily high degrees, a contradiction. Thus $\mathcal P_1(d\bar\mu)$ would be a \Kpi, so non-asphericity of $\mathcal P_1(d\bar\mu)$ implies non-asphericity of $\mathcal H_1(d\bar\mu)$. Theorem~\ref{thm:main} therefore implies the following unprojectivized non-asphericity statement.

\begin{theorem}
\label{thm:unproj}
If $s\ge 4$ or $t\ge 4$,
then every primitive connected component $\mathcal H_1(\bar \mu)$ of unprojectivized strata of genus-one differentials is not a \Kpi.

If $s\ge 3$ or $t\ge 3$,
then every non-primitive connected component $\mathcal H_1(d\bar \mu)$ with $d\ge 2$ is not a \Kpi.
\end{theorem}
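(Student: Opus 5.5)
Theorem~\ref{thm:unproj} follows formally from Theorem~\ref{thm:main} via the $\C^*$-fibration argument just given. The plan is therefore to record that reduction carefully.

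Fix a component $\mathcal H_1(d\bar\mu)$ satisfying the hypotheses of either part. The projection
\[
\mathcal H_1(d\bar\mu)\to\mathcal P_1(d\bar\mu)
\]
is a $\C^*$-bundle, and it is locally trivial. Scaling the differential by $\C^*$ preserves the divisor, hence preserves the torsion number. So the preimage of a connected component is exactly the corresponding unprojectivized component. If one works with orbifolds (automorphisms of the marked elliptic curve), I would pass to a finite manifold cover, using that the \Kpi{} property is insensitive to finite covers, as remarked earlier.

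Argue by contradiction and assume $\mathcal H_1(d\bar\mu)$ is a \Kpi. Apply \eqref{eq:fibration-les} with $F=\C^*$. Since $\pi_i(\C^*)=0$ for $i\ge2$, for $i\ge3$ the segment
\[
\pi_i(E)=0\to\pi_i(B)\to\pi_{i-1}(\C^*)=0
\]
gives $\pi_i(\mathcal P_1(d\bar\mu))=0$. For $i=2$ the segment
\[
0=\pi_2(E)\to\pi_2(B)\to\pi_1(\C^*)\cong\Z
\]
shows that $\pi_2(B)$ embeds in $\Z$. If it is nonzero, it is $\cong\Z$. The universal cover $\widetilde B$ then has $\pi_2\cong\Z$ and all other homotopy groups zero, so $\widetilde B\simeq K(\Z,2)\simeq\C\mathbb P^\infty$.

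On the other hand, $B$ is a smooth quasi-projective variety (or orbifold) of finite complex dimension. It therefore has the homotopy type of a finite-dimensional CW complex, and so does $\widetilde B$ via the lifted cell structure. This contradicts $H^{2k}(\C\mathbb P^\infty;\Z)\neq0$ for all $k$. Hence $\pi_2(B)=0$ and $B$ is a \Kpi, which contradicts Theorem~\ref{thm:main}.

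The only step needing care is justifying finite-dimensionality in the orbifold setting. I would handle it by replacing $\mathcal P_1(d\bar\mu)$ with a finite-level manifold cover, for example by adding level structure, which is a smooth variety. Then I would pull the $\C^*$-bundle back to that cover and run the argument there.
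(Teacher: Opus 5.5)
Your proposal is correct and follows the paper's own argument. The homotopy long exact sequence of the $\C^*$-bundle $\mathcal H_1(d\bar\mu)\to\mathcal P_1(d\bar\mu)$ kills $\pi_i$ for $i\ge3$ and embeds $\pi_2$ into $\Z$; a nonzero $\pi_2$ is then excluded because the universal cover would be a finite-dimensional model of $K(\Z,2)\simeq\C\mathbb P^\infty$. Your extra remarks, on preimages of components and on passing to a level-structure manifold cover to justify finite-dimensional CW type, only make explicit what the paper asserts in passing.
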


\subsection{Stability conditions and contractibility}

Our result also has implications for Bridgeland stability conditions. The space
$\Stab(\mathcal D)$ of stability conditions on a triangulated category
$\mathcal D$ is a central object in modern geometry and representation theory.
Bridgeland raised the question of whether these spaces are always contractible
\cite[Remark~3.4]{Bri09Survey}. For K3 surfaces, his earlier conjecture predicts
that the distinguished connected component is simply connected
\cite[Conjecture~1.2]{Bri08}. In Picard rank one, simple connectivity is
equivalent to contractibility in view of Bridgeland's covering description, and
Bayer--Bridgeland proved that the component is indeed contractible
\cite{BB17}; while for higher Picard rank, whether the universal cover of the
base $\mathcal P_0^+(X)$ is contractible remains open
\cite[Remark~1.5(a)]{BB17}. Together with the examples recalled below, these
results support the broader folklore expectation that stability spaces, or at
least their connected components, should be contractible.

Bridgeland showed that, for $2$-Calabi--Yau categories arising from Kleinian
singularities, certain connected components of their stability spaces are covering
spaces of regular orbit spaces of finite or affine Dynkin type \cite{Bri09}. In the
finite Dynkin case, combining this description with Brav--Thomas' result on the
faithfulness of the braid group action \cite{BT11} and Deligne's
$K(\pi,1)$-result \cite{De72} yields contractibility of the corresponding
stability component. In affine type $A$, the same conclusion follows from
Ishii--Ueda--Uehara's faithfulness result \cite{IUU10} and Okonek's
$K(\pi,1)$-result \cite{Oko79}. More generally, together with Bridgeland's
covering description, Paolini--Salvetti's proof of the $K(\pi,1)$-conjecture
for all affine Artin groups \cite{PS21} shows that, for the affine Dynkin
stability components associated with Kleinian singularities, simple connectedness
is equivalent to contractibility.

The third author and Woolf proved that any finite-type component of the space of stability conditions is always contractible \cite{QW18}. This covers the Dynkin case studied in \cite{Bri09} and provides an alternative proof of Deligne's result \cite{De72}; the types $B,C,F,G,H,I$ require additional arguments \cite{QZ23}. More examples of finite-type components can be found in \cite{BPP16, AW22}; see also the survey \cite{Heng24}.

Li established the contractibility of the stability space for $\mathbb P^2$ \cite{Li17}, and an alternative proof via the global dimension function was given in \cite{FLLQ23}. This approach is expected to extend to certain topological Fukaya categories \cite{Q25}. By \cite{HKK17}, the stability spaces arising from these categories correspond to moduli spaces of Teichm\"uller-framed quadratic differentials with only exponential-type singularities, rather than the classical zeros and poles considered in this paper.

Further examples of contractible stability spaces from algebraic geometry include smooth projective curves of positive genus \cite{Macri07}, certain projective varieties \cite{FLZ22}, and a product of three elliptic curves \cite{HaidenSung24}.

There is a correspondence between quadratic differentials and stability conditions, first established by Bridgeland--Smith \cite[Thm.~1.2]{BS15} and Haiden--Katzarkov--Kontsevich \cite[Thm.~5.3]{HKK17} and later generalized in various ways; see \cite[Thm.~1.1]{BMQS24} and \cite[Thm.~1.1]{CHQ23}.
More precisely, after the usual framing or enhancement choices in these correspondences, a connected component of such a stratum is covered by a connected component of the stability space of a suitable triangulated category, such as the corresponding topological Fukaya category in \cite{HKK17}.
In the meromorphic case, provided the polar type is not $(-2)$, there are infinitely many choices of such a triangulated category, by the constructions of \cite{CHQ23}, where the category may be chosen Calabi--Yau, and \cite{BMQS24}.
Thus, after interpreting $\mathcal H_1(\mu)$ as a stratum of genus-one quadratic differentials, Theorem~\ref{thm:unproj} produces infinitely many strata of quadratic differentials for which a corresponding stability space has a connected component that is not a \Kpi; here we use that covering spaces have the same higher homotopy groups.

\begin{theorem}
\label{thm:main2}
Regard $\mathcal H_1(\mu)$ as the corresponding moduli space of quadratic differentials, and let $\mathrm{Quad}_1^{\circ}(\mu)$ be a connected component of $\mathcal H_1(\mu)$ that is not a \Kpi, as in Theorem~\ref{thm:unproj}. Then there exists a triangulated category $\mathcal D$, depending on the component but not necessarily uniquely determined, such that a connected component of $\Stab(\mathcal D)$ is a covering space of $\mathrm{Quad}_1^{\circ}(\mu)$. In particular, this component of $\Stab(\mathcal D)$ is not a \Kpi.
\end{theorem}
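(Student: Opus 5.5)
The plan is to derive the theorem from Theorem~\ref{thm:unproj} together with the correspondence between quadratic differentials and stability conditions of \cite{BS15,HKK17}, generalized in \cite{BMQS24,CHQ23}.

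\textbf{Reinterpreting the stratum.} Since the canonical bundle of an elliptic curve is trivial, a genus-one differential of type $\mu$ is the same as a quadratic differential of type $\mu$. Accordingly, we regard the given non-aspherical component of $\mathcal H_1(\mu)$ as a component $\mathrm{Quad}_1^{\circ}(\mu)$ of a stratum of genus-one quadratic differentials. Its zeros and poles are labeled. In the presence of poles we may also pass to the appropriate framed or enhanced version of the stratum required by the correspondence. Any such framing is a covering of $\mathrm{Quad}_1^{\circ}(\mu)$, possibly after forgetting discrete data, so it has the same higher homotopy groups.

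\textbf{Constructing $\mathcal D$ and the covering map.} We split into two cases.
\begin{itemize}
\item When there are poles (i.e.\ $t\ge1$, which always holds in genus one since $\sum m_i=\sum n_j>0$), we invoke \cite[Thm.~1.1]{CHQ23} or \cite[Thm.~1.1]{BMQS24}. For a decorated marked surface built from the genus-one surface with the prescribed zero and pole data, these produce a triangulated category $\mathcal D$, such as a Calabi--Yau-$\mathbb X$ or topological Fukaya category, with the following property: a connected component $\Stab^\circ(\mathcal D)$ is isomorphic, as a complex manifold, to a component of the moduli space of framed quadratic differentials. Composing with the map forgetting the framing exhibits $\Stab^\circ(\mathcal D)$ as a covering space of $\mathrm{Quad}_1^{\circ}(\mu)$, with deck group a subgroup of the relevant mapping class group quotient.
\item The polar type $(-2)$ does not occur under the hypotheses of Theorem~\ref{thm:unproj}, except possibly in degenerate cases with $t=1$. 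When $t=1$ we reverse the roles of zeros and poles using $\mathcal P_1(\mu)\cong\mathcal P_1(-\mu)$, or treat the case directly via \cite{HKK17}.
\end{itemize}

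\textbf{Concluding.} A covering map $p\colon X\to Y$ induces isomorphisms $\pi_i(X)\cong\pi_i(Y)$ for $i\ge2$. Theorem~\ref{thm:unproj} says $\mathrm{Quad}_1^{\circ}(\mu)$ is not a $K(\pi,1)$, so some $\pi_i$ with $i\ge2$ is nonzero. Here we use that the orbifold issue disappears after passing to the labeled, framed cover, on which the group action is free. The same $\pi_i$ is then nonzero for $\Stab^\circ(\mathcal D)$, so this component is not a $K(\pi,1)$ and in particular is not contractible.

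\textbf{Main obstacle.} The hardest step is checking that the hypotheses of the cited correspondences hold exactly for our strata: genus one, possibly higher-order zeros, labeled singularities, and simple poles or higher poles. It must also be checked that the relevant framed stratum maps onto the chosen component as a genuine covering, and not merely as an orbifold quotient with non-free stabilizers. I would handle this by passing to a finite cover on which the torsion-number component lifts to a manifold, so that $K(\pi,1)$-ness is unaffected.
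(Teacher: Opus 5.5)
Your proposal is correct and follows the paper's own argument. The paper identifies $\mathcal H_1(\mu)$ with the genus-one quadratic-differential stratum via the trivial canonical bundle. It then invokes the Bridgeland--Smith / Haiden--Katzarkov--Kontsevich correspondence and its generalizations to get a stability component covering the framed, and hence the unframed, stratum component, and concludes because coverings preserve $\pi_i$ for $i\ge 2$.

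One small correction: in the paper the polar-type-$(-2)$ caveat only concerns having infinitely many choices of $\mathcal D$, not existence. In any case, polar type $(-2)$ forces $t=1$ and $s\le 2$, which the hypotheses of Theorem~\ref{thm:unproj} exclude, so your $t=1$ case split is unnecessary.
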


\subsection{Proof strategy and arrangement complements}

In order to prove Theorem~\ref{thm:main}, we first reduce the problem to a fixed elliptic curve $E$. Using the group structure of self-products of $E$ and associated universal covers, the problem can be further reduced to studying the complement of a hyperplane arrangement whose hyperplanes arise from the linear equivalence relation among the zeros and poles, together with their collision conditions. Finally, we apply a Zariski-section obstruction to prove the desired non-asphericity. We also explain how this obstruction is related to the Dimca--Papadima theorem on generic sections of aspherical arrangements \cite{DP03} and hypersolvable arrangements, and record in Appendix~\ref{app:half-space} an alternative proof using Yoshinaga's half-space criterion \cite{Y24}.

There is considerable literature on the $K(\pi,1)$-problem for hyperplane arrangements, which is closely related to the other $K(\pi,1)$-problems discussed above. The non-asphericity results mentioned previously fit into a line of work on arrangement complements \cite{Hat75,R97,PS02,DP03,Y08,Y24}. On the other hand, positive results are known for simplicial arrangements, fiber-type arrangements, certain reflection arrangements, and arrangements satisfying suitable curvature tests \cite{De72,Te86,Be15,PaS25,Fa95,GH25}. Since the self-products of elliptic curves considered here are abelian varieties, we also mention general positive \Kpi~results for fiber-type abelian arrangements \cite{BD24}.

When the numbers of zeros and poles are both small, the \Kpi~status of $\mathcal P_1(d\bar \mu)$ is mixed; for example, when $s=t=1$, each stratum is a cover of the modular curve and hence is a \Kpi. Moreover, in genus one, a differential with a nonempty set of zeros must also have poles. Therefore, it would be interesting to look for a stratum of holomorphic differentials in higher genus that is not a \Kpi. We plan to treat these questions in future work.

The paper is organized as follows. In Section~\ref{sec:fixed}, we reduce the problem from strata to a fixed elliptic curve and explain how non-asphericity transfers back to the corresponding components. In Section~\ref{sec:localization}, we recall the localization obstruction for arrangement complements and prove the Zariski-section obstruction. In Section~\ref{sec:primitive}, we apply this obstruction to the identity torsion fiber and prove the primitive case of Theorem~\ref{thm:main} in Proposition~\ref{prop:identity}. In Section~\ref{sec:nonprimitive}, we treat nonzero torsion fibers and prove the non-primitive case of Theorem~\ref{thm:main} in Proposition~\ref{prop:nonzero}. Appendix~\ref{app:half-space} gives an alternative proof of the local arrangement obstructions via Yoshinaga's half-space criterion. Throughout the paper, for moduli spaces regarded as orbifolds, their fundamental groups and universal covers are always considered in the orbifold sense.

\subsection*{Acknowledgements}
The collaboration between D.C. and J.H. began during the 2026 Cornell Topology Festival. We thank the organizers and Xin Zhou for the invitation and hospitality. During the preparation of this work, D.C. was supported by NSF grant DMS-2301030 and the Simons Foundation's Travel Support for Mathematicians program; J.H. was supported by a Sloan Fellowship and NSF grant DMS-2305411; Y.Q. was supported by the National Natural Science Foundation of China under grant number 12425104; F.Y. was supported by the Fundamental Research Funds for the Central Universities 226-2024-00136.

The proof using Yoshinaga's criterion in Appendix~\ref{app:half-space} arose first from an iterative, guided conversation with ChatGPT (version 5.5 Plus). The authors verified, corrected, and revised the proof, and take responsibility for the final content. The proof in Section~\ref{sec:localization} using the Zariski-section obstruction was then obtained by the authors and is the proof used in the main text. The appendix is retained because the Yoshinaga-based argument provides a complementary viewpoint. 

\section{From strata to a fixed elliptic curve}
\label{sec:fixed}

In this section we reduce the proof of the main result to a fixed elliptic curve $E$. Choose a zero index $a\in \{1,\ldots, s\}$ and use the corresponding zero as the origin of $E$. Put
\[
 I_a=\{1,\ldots,s\}\setminus\{a\}.
\]
Define the \emph{primitive zero-normalized Abel map}
\[
 \bar\Phi_a^z\colon E^{I_a}\times E^t\longrightarrow E,
 \qquad
 \bar\Phi_a^z((z_i),p_1,\ldots,p_t)
 =\sum_{i\in I_a}\bar m_i z_i-
   \sum_{j=1}^t \bar n_j p_j.
\]
The full Abel equation for the original signature $\mu$
is
\[
 d_\mu\bar\Phi_a^z=0.
\]
Hence the full Abel kernel decomposes into \emph{torsion fibers}
\[
 X_a^{z,\tau}(E)=(\bar\Phi_a^z)^{-1}(\tau),
\]
where $\tau\in E[d_\mu]$ is a $d_\mu$-torsion point.
Let $\Delta_a^z$ be the union of \emph{collision divisors} defined by
\[
 z_i=0,
 \quad p_j=0,
 \quad z_i=z_{i'},
 \quad p_j=p_{j'},
 \quad z_i=p_j.
\]
Set
\[
 F_a^{z,\tau}(E)=X_a^{z,\tau}(E)\setminus\Delta_a^z.
\]

For a fixed $E$, the fibers $F_a^{z,\tau}(E)$ are indexed by individual torsion points $\tau\in E[d_\mu]$. If $E$ varies in the modular curve, then for every divisor $r$ of $d_\mu$, the union of the fibers over all torsion points $\tau$ of exact order $r$ is monodromy-invariant and gives the corresponding component of the projectivized stratum with torsion number $d_\mu/r$. In particular, $r=1$, equivalently $\tau=0$, gives the component naturally identified with the primitive stratum $\mathcal P_1(\bar\mu)$.

Moreover, for every $\tau\in E[d_\mu]$, the space $F_a^{z,\tau}(E)$ is connected whenever it is nonempty. The only empty case is
$\tau=0$ and $\bar\mu=(1,-1)$; if this occurs for a non-primitive signature, it is just the empty primitive fiber and is not used below. Indeed, the remaining coefficients in $\bar\Phi_a^z$ are primitive, since any common divisor of them also divides $\bar m_a=\sum_j\bar n_j-\sum_{i\ne a}\bar m_i$; hence $X_a^{z,\tau}(E)$ is a connected translate of a subtorus. In every nonempty case, the collision divisor restricts to a proper analytic subset of this torus, and its complement is connected.

\begin{proposition}\label{prop:transfer}
If $F_a^{z,\tau}(E)$ is not a \Kpi, then the corresponding connected component of the projectivized stratum of genus-one differentials is not a \Kpi.
\end{proposition}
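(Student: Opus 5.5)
Let $\mathcal P^\circ$ denote the component of $\mathcal P_1(d_\mu\bar\mu)$ with torsion number $d_\mu/r$, where $r$ is the exact order of $\tau$. The plan is to realize $\mathcal P^\circ$, up to finite covers and orbifold quotients, as the total space of a fibration over the modular curve whose fiber is a finite disjoint union of translates of $F_a^{z,\tau}(E)$. We then use the long exact sequence \eqref{eq:fibration-les} together with the fact that the base is aspherical.

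\emph{Construction of the fibration.} Pass to the labeled stratum, which is harmless since asphericity is preserved under finite covers. A point of $\mathcal P^\circ$ is an elliptic curve with labeled points $z_1,\dots,z_s,p_1,\dots,p_t$ satisfying the Abel relation. Using $z_a$ as the origin identifies it with $(E,0)$ together with a point of $\bigsqcup_{\tau'} F_a^{z,\tau'}(E)$, where $\tau'$ ranges over the torsion points of exact order $r$. No projectivization issue arises, since $\mathcal P_1$ is defined as a space of divisors. Over the upper half-plane $\mathbb H$ we have the universal family $E_\lambda=\C/(\Z+\lambda\Z)$. Let $\mathcal F\to\mathbb H$ be the family whose fiber over $\lambda$ is $\bigsqcup_{\tau'}F_a^{z,\tau'}(E_\lambda)$. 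Then $\mathcal P^\circ$ is the orbifold quotient $\mathcal F/\mathrm{SL}_2(\Z)$, where $-1$ acts by $x\mapsto -x$ on all coordinates. The map $\mathcal F\to\mathbb H$ is a locally trivial fibration. To see this, trivialize $E_\lambda\cong \R^2/\Z^2$ real-analytically. The Abel condition and the collision divisors are then given by $\lambda$-independent real-linear conditions on $(\R^2/\Z^2)^{s-1+t}$, so in fact $\mathcal F\cong \mathbb H\times\bigsqcup_{\tau'}F_a^{z,\tau'}(E)$.

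\emph{Homotopy argument.} Pass to a torsion-free finite-index subgroup $\Gamma\subset \mathrm{SL}_2(\Z)$, for instance $\Gamma(3)$, which also stabilizes each $\tau'$. Then $\mathcal F/\Gamma$ is a finite (orbifold) cover of $\mathcal P^\circ$. Its connected component containing $F_a^{z,\tau}(E)$ is a genuine manifold fibering over the aspherical curve $\mathbb H/\Gamma$ with fiber $F_a^{z,\tau}(E)$, possibly after replacing $\Gamma$ by the stabilizer of $\tau$. Since $\pi_i(\mathbb H/\Gamma)=0$ for $i\ge2$, the sequence \eqref{eq:fibration-les} gives $\pi_i(F)\cong\pi_i(\text{total space})$ for $i\ge 2$. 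Hence if $F_a^{z,\tau}(E)$ is not a \Kpi, the total space is not aspherical. This total space is a finite cover of $\mathcal P^\circ$ in the orbifold sense, so the orbifold universal covers agree. Therefore $\mathcal P^\circ$ is not a \Kpi.

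\emph{Main obstacle.} The delicate step is bookkeeping for the orbifold structure. The generic automorphism $-1$ and the extra automorphisms at $j=0,1728$ must be absorbed by passing to $\Gamma$. One also has to check that the identification of the component with $\mathcal F/\mathrm{SL}_2(\Z)$ is correct as orbifolds. In particular, the choice of origin $z_a$ kills translations, so the only remaining symmetry is the $\mathrm{SL}_2(\Z)$ action. The connectedness of the chosen component, that is, monodromy transitivity on the $\tau'$, is not actually needed: we only use a single component of a finite cover.
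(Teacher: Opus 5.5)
Your proof is correct and is essentially the paper's argument. The component fibers over the aspherical modular curve with fiber $F_a^{z,\tau}(E)$, and triviality holds because the Abel fiber and the collision divisors are constant in real-linear torus coordinates (in your version it is even global over $\mathbb H$, via $E_\lambda\cong\R^2/\Z^2$); the long exact sequence then finishes, and your torsion-free level subgroup is the ordinary-space variant the paper mentions at the end of its proof.

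One small correction: $\Gamma(3)$ fixes each $\tau'$ only when $r\mid 3$. You should take e.g.\ $\Gamma(3)\cap\Gamma(r)$ instead, which is what your replacement of $\Gamma$ by the stabilizer amounts to.
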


\begin{proof}
Let $r=\operatorname{ord}(\tau)$, and let $\mathcal Y_1(r)$ be the modular curve of pairs $(E,\eta)$, where $\eta\in E$ has exact order $r$ (for $r=1$, $\eta=0$). Using the marked zero $z_a$ as the origin, the projectivized stratum component with torsion number $d_\mu/r$ is the relative complement over $\mathcal Y_1(r)$ whose fiber over $(E,\eta)$ is
\[
 (\bar\Phi_a^z)^{-1}(\eta)\setminus\Delta_a^z.
\]
In particular, the fiber over $(E,\tau)$ is the given space $F_a^{z,\tau}(E)$. This relative complement is locally trivial: locally, after trivializing the universal elliptic curve as a group torus and making the torsion section $\eta$ constant, the Abel fiber and the collision divisors form a constant relative subtorus arrangement.

Since $\mathcal Y_1(r)$ is a \Kpi, if the stratum component were a \Kpi, the homotopy long exact sequence \eqref{eq:fibration-les} would force $\pi_i(F_a^{z,\tau}(E))=0$ for all $i\ge2$. Thus the fiber would be a \Kpi, contradicting the hypothesis. Therefore the stratum component is not a \Kpi. If desired, one can make this argument entirely at the level of ordinary topological spaces by adding sufficiently high full level structure; on the resulting finite cover, the same map is a locally trivial fiber bundle over a manifold instead of an orbifold.
\end{proof}

\section{Localization and the Zariski-section obstruction} \label{sec:localization}

Viewing products of elliptic curves as abelian varieties, in this section we first reduce questions about abelian arrangements to questions about hyperplane arrangements. We then record a Zariski-section obstruction for central hyperplane arrangements, which will be the main arrangement-theoretic input in the proof.

Let $X$ be a translate of a complex abelian variety and let $\mathcal B$ be a finite arrangement of translated codimension-one abelian subvarieties. When such an arrangement is obtained by restricting divisors, we tacitly decompose the restrictions into connected codimension-one components and discard empty or non-divisorial pieces.

A \emph{layer} $L$ is a connected component of an intersection of some members of $\mathcal B$. Choose any $p\in L$. The \emph{tangent localization} at $L$ is the central hyperplane arrangement in the normal space $T_pX/T_pL$ consisting of the images of $T_pB$ in the quotient for all $B\in\mathcal B$ with $L\subset B$; members meeting $L$ properly are not included. This definition is independent of $p\in L$ up to the natural identifications by translation.

\begin{proposition}\label{prop:abelian-localization}
For a finite abelian arrangement $\mathcal B$ in $X$, if
\[
 M(\mathcal B)=X\setminus\bigcup_{B\in\mathcal B}B
\]
is a \Kpi, then the complement of every tangent localization of $\mathcal B$ is a \Kpi. Equivalently, if some tangent localization has complement that is not a \Kpi, then $M(\mathcal B)$ is not a \Kpi.
\end{proposition}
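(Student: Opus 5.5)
Pass to the universal cover and reduce to the classical localization principle for hyperplane arrangements. Write $X = p_0 + A$ with $A$ an abelian variety, and let $\pi\colon V = T_0A \to X$, $v\mapsto p_0+\exp(v)$, be the universal covering, with $V\cong\C^n$ and lattice $\Lambda$. Each translated codimension-one abelian subvariety $B\in\mathcal B$ lifts to a locally finite, $\Lambda$-periodic family of affine complex hyperplanes in $V$. Hence $\pi^{-1}(M(\mathcal B)) = V\setminus\bigcup \widetilde{\mathcal B}$ is the complement of a locally finite affine hyperplane arrangement $\widetilde{\mathcal B}$, and it covers $M(\mathcal B)$. Since covering spaces have the same higher homotopy groups, $M(\mathcal B)$ is a \Kpi{} exactly when $\widetilde M := V\setminus\bigcup\widetilde{\mathcal B}$ is aspherical. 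Note that $\widetilde M$ may be disconnected only if some preimage fails to be connected, but complements of complex hyperplanes are connected.

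Next identify the tangent localization. Fix a layer $L$, a point $p\in L$, and a lift $\tilde p\in V$. The hyperplanes of $\widetilde{\mathcal B}$ through $\tilde p$ are precisely the lifts through $\tilde p$ of those $B\in\mathcal B$ containing $p$. Those members that contain $p$ but merely meet $L$ properly must be handled here. I would choose $p$ generic in $L$, outside the finitely many proper intersections $L\cap B$ with $L\not\subset B$; by the stated independence of $p$, this is allowed. Then the hyperplanes through $\tilde p$ are exactly the lifts $\tilde p+T_pB$ for $B\supset L$. Choose a small ball $U$ around $\tilde p$ meeting no other hyperplane of $\widetilde{\mathcal B}$, which is possible by local finiteness. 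Then $U\cap\widetilde M$ is a ball minus the central arrangement $\mathcal A_{\tilde p}=\{T_pB : B\supset L\}$ in $T_pX$. This arrangement is the product of the tangent localization in $T_pX/T_pL$ with the vector space $T_pL$, so its complement is homotopy equivalent to the complement of the tangent localization.

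The key step is the standard fact that a local complement is a retract, up to homotopy, of the global complement: $M(\mathcal A_{\tilde p})\simeq U\cap \widetilde M$ includes into $\widetilde M$ with a left homotopy inverse on homotopy groups. Concretely, I would use the argument of Oka/Dimca--Papadima (cf.\ \cite{DP03}, \cite{PS02}), which I expect to be the main obstacle to state cleanly. Let $\mathcal A'$ be the central arrangement through $\tilde p$. Then $\widetilde M\subset V\setminus\bigcup\mathcal A'$, and the composite
\[
U\cap\widetilde M = U\setminus\textstyle\bigcup\mathcal A' \hookrightarrow \widetilde M\hookrightarrow V\setminus\textstyle\bigcup\mathcal A'
\]
is a homotopy equivalence, since radial contraction toward $\tilde p$ preserves a central arrangement. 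Consequently $\pi_k(M(\mathcal A'))$ is a retract of $\pi_k(\widetilde M)$ for all $k$.

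It follows that if $\widetilde M$, equivalently $M(\mathcal B)$, is aspherical, then $\pi_k(M(\mathcal A'))=0$ for all $k\ge2$, so the tangent localization complement is a \Kpi. The contrapositive is the second formulation of the statement. The only care needed is choosing $p$ generic in $L$ and checking local finiteness of the periodic lift; both follow from compactness of $X$ and finiteness of $\mathcal B$.
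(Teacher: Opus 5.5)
Your proposal is correct and follows essentially the same route as the paper. Like the paper, you lift to the universal cover to get a locally finite affine arrangement, choose $p\in L$ generic so that only lifts of members containing $L$ pass through $\tilde p$, use a small ball and radial contraction to make the local central complement a homotopy retract of $\widetilde M$ (so its higher homotopy groups vanish when those of $\widetilde M$ do), and split off the contractible factor $T_pL$. The differences are cosmetic: you cite different references for Oka's localization step, and your connectedness remark for $\widetilde M$ is garbled, though its conclusion holds because the deleted hyperplanes have real codimension two.
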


\begin{proof}
Lifting the abelian arrangement to the universal cover of $X$, we obtain a locally finite affine hyperplane arrangement $\mathcal A$ in $\C^n$, where $n=\dim X$. Its complement is a covering space of $M(\mathcal B)$. If $M(\mathcal B)$ is aspherical, then this affine arrangement complement $M(\mathcal A)$ is aspherical as well.

Given a layer $L$ of $\mathcal B$, choose a general point $p\in L$, avoiding all members of $\mathcal B$ that do not contain $L$; after passing to the quotient normal space, this makes the actual local germ agree with the tangent localization at $L$, with no extra local hyperplanes. Let $q\in \C^n$ be a lift of $p$. By this choice of $p$, the hyperplanes of $\mathcal A$ through $q$ are exactly the lifts through $q$ of the members of $\mathcal B$ that contain $L$. The \emph{local arrangement $\mathcal A_{q}$ at $q$} is this finite collection of affine hyperplanes. Let $\widetilde L$ be the intersection of all hyperplanes in $\mathcal A_{q}$. The local arrangement descends to a central hyperplane arrangement in $T_{q}\C^n/T_{q} \widetilde L$.

We now apply Oka's localization argument for finite complex hyperplane arrangements, as stated for example in \cite[Lemma~1.1]{P93} or \cite[Proposition 8.1 and Corollary 8.2]{Y26}, only to this finite local arrangement. By local finiteness, there is a sufficiently small ball $B_q$ around $q$ that meets no hyperplanes of $\mathcal A$ except those passing through $q$. Hence
\[
 B_q\cap M(\mathcal A)=B_q\cap M(\mathcal A_q).
\]

Since $\mathcal A_q$ is central at $q$, radial contraction shows that the inclusion $B_q\cap M(\mathcal A_q)\hookrightarrow M(\mathcal A_q)$ is a homotopy equivalence. Composing a homotopy inverse of this inclusion with the inclusion $B_q\cap M(\mathcal A)=B_q\cap M(\mathcal A_q)\hookrightarrow M(\mathcal A)$, we obtain a map $f\colon M(\mathcal A_q)\to M(\mathcal A)$. The composite of $f$ with the natural inclusion $M(\mathcal A)\hookrightarrow M(\mathcal A_q)$ is homotopic to the identity on $M(\mathcal A_q)$. Therefore the latter inclusion induces surjections on all higher homotopy groups. As $M(\mathcal A)$ is aspherical, $M(\mathcal A_q)$ is aspherical.

Finally, $M(\mathcal A_q)$ is the product of the contractible factor $\widetilde L$ with the complement of the central arrangement in $T_{q}\C^n/T_{q}\widetilde L$. Thus the latter complement is aspherical. By the choice of $p$, this central arrangement is identified with the tangent localization in $T_pX/T_pL$.
\end{proof}

Let $\mathcal A$ be a finite arrangement of affine hyperplanes in a complex vector space $V$. Its \emph{intersection poset} is
\[
 L(\mathcal A)=\left\{\bigcap_{H\in\mathcal B}H\ne\emptyset\;\middle|\; \mathcal B\subseteq\mathcal A\right\},
\]
ordered by reverse inclusion; by convention the empty intersection is $V$. Elements of $L(\mathcal A)$ are called \emph{flats} of $\mathcal A$, and a flat $X\in L(\mathcal A)$ has \emph{rank} $\operatorname{codim}_V X$. The rank of $\mathcal A$ is the dimension of the span of the linear parts of defining equations for its hyperplanes. We say that $\mathcal A$ is \emph{central} if $\bigcap_{H\in\mathcal A}H\ne\emptyset$, and \emph{essential} if its rank is $\dim V$. Equivalently, a central arrangement is essential exactly when the intersection of all its hyperplanes is a single point. After translating such a point to the origin, a central arrangement becomes an arrangement of linear hyperplanes; we use this convention when projectivizing central arrangements below.

If $U\subset V$ is a linear subspace, the \emph{restricted arrangement} is
\[
 \mathcal A_U=\{H\cap U:H\in\mathcal A,\ H\cap U\ne U\}.
\]
Following \cite[Section~5]{DP03}, we say that $U$ is \emph{$L_k(\mathcal A)$-generic} if
\[
 \operatorname{codim}_V X=
 \operatorname{codim}_U(X\cap U)
\]
for every flat $X\in L(\mathcal A)$ of rank at most $k+1$. In particular, when $U$ is a hyperplane, this is equivalent to saying that $U$ contains no flat of $\mathcal A$ of rank at most $k+1$. Thus the notation $L_2$-generic below refers to flats of rank at most three.

\begin{lemma}\label{lem:zariski-section}
Let $\mathcal A$ be a central arrangement in a complex vector space $V$ with $\dim V\ge4$, and let $U\subset V$ be an $L_2(\mathcal A)$-generic subspace such that the restricted arrangement $\mathcal A_U$ is essential and $\dim U\ge3$. Then the inclusion
\[
 M(\mathcal A_U)\hookrightarrow M(\mathcal A)
\]
induces an isomorphism on fundamental groups.
\end{lemma}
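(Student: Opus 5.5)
This is a Zariski--Lefschetz type statement. We plan to reduce it to the classical hyperplane-section theorem of Hamm--L\^e, via Zariski's theorem on generic plane sections, together with the combinatorial description of $\pi_1$ of an arrangement complement. The key input is that $\pi_1(M(\mathcal A))$ is determined by the rank-$\le 2$ part of the intersection poset together with a generic $2$-dimensional linear section (Zariski--van Kampen). More precisely, we use the following fact. If $W\subset V$ is a generic $3$-dimensional linear subspace (equivalently a generic projective plane after projectivizing), then $M(\mathcal A_W)\hookrightarrow M(\mathcal A)$ is a $\pi_1$-isomorphism. This is the Zariski theorem of Hamm--L\^e for the hypersurface complement $V\setminus\bigcup H$.

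First we reduce to a nested chain. Choose $W\subset U$ to be a $3$-dimensional subspace that is generic with respect to $\mathcal A_U$; this is possible since $\dim U\ge3$. Then:
\begin{itemize}[leftmargin=*]
\item $M(\mathcal A_W)\to M(\mathcal A_U)$ is a $\pi_1$-isomorphism by the generic Zariski theorem applied to the arrangement $\mathcal A_U$ in $U$.
\item $M(\mathcal A_W)\to M(\mathcal A)$ is a $\pi_1$-isomorphism if $W$ is also generic for $\mathcal A$.
\end{itemize}
The two-out-of-three property then gives the lemma.

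The difficulty is that $W$ is forced to lie in $U$, so it need not be generic for $\mathcal A$ in the Zariski open sense. Here we use the precise form of the Hamm--L\^e/Zariski statement. For a central arrangement, the $\pi_1$-isomorphism for a linear section $W$ holds as soon as $W$ is transverse to the stratification up to codimension two. This means $\operatorname{codim}_W(X\cap W)=\operatorname{codim}_V X$ for all flats $X$ of rank $\le 3$ (so that, after projectivizing to $\mathbb P(W)\cong\mathbb P^2$, lines stay distinct and the multiple points in $\mathbb P^2$ correspond exactly to rank-$2$ flats). That condition is exactly $L_2$-genericity, as in \cite[Section~5]{DP03}, where Dimca--Papadima prove this Lefschetz-type statement for $L_k$-generic sections. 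Since $U$ is $L_2(\mathcal A)$-generic, and $W$ is chosen generic inside $U$ for $\mathcal A_U$, the composite $W$ is $L_2(\mathcal A)$-generic, because codimensions of flats are preserved at each step. Alternatively, we can invoke \cite[Theorem~1.1 / Section~5]{DP03} directly for $U$, which gives $\pi_i$-isomorphisms for $i\le k-1=1$ for $L_k$-generic sections with $k=2$. Essentiality of $\mathcal A_U$ and $\dim U\ge3$ ensure that the projective section $\mathbb P(U)$ has dimension $\ge2$. This is needed so that Zariski--van Kampen over a generic pencil sees all relations, and so that the Morse-theoretic attaching cells have index $\ge\dim U\ge3$.

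The main obstacle is verifying the Lefschetz-type step from $L_2$-genericity alone, rather than full genericity. We would attack it by the stratified Morse theory argument of Hamm--L\^e and Dimca--Papadima. Project $\mathbb P(V)\setminus\mathbb P(U)$ linearly, and use that the non-generic behavior occurs only along flats of rank $\ge4$. These have complex codimension $\ge4$, so they contribute cells of real dimension $\ge3$ and do not affect $\pi_1$. Alternatively, we can compare presentations: both fundamental groups are generated by meridians of hyperplanes, with Randell/Arvola-type relations coming from rank-$2$ flats. $L_2$-genericity identifies the hyperplanes, the rank-$2$ flats, and (via rank $3$) the incidences needed for the braid monodromy, so the presentations agree.
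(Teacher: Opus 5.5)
Your proposal is correct and is essentially the paper's proof: choose a $3$-dimensional $W\subset U$ that is $L_2(\mathcal A_U)$-generic, check that it is then $L_2(\mathcal A)$-generic, apply the stratified Zariski theorem (transversality through rank-three flats) to both $W\subset U$ and $W\subset V$, and compare the two isomorphisms. Two things to tidy. First, Zariski's theorem is a projective statement, and linear sections through the origin are not generic affine sections, so you should pass to the central complements explicitly via the splitting $M(\mathcal A)\cong M_{\mathbb P}(\mathcal A)\times\C^*$ given by a form of some $H_0\in\mathcal A$ not containing $U$; this is where the hypothesis on $\mathcal A_U$ actually enters, whereas $\dim\mathbb P(U)\ge2$ needs only $\dim U\ge3$. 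Second, drop the ``compare presentations'' alternative: $\pi_1$ of a line-arrangement complement is not determined by its rank-$\le2$ combinatorics, so that route would not work as stated.
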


This is a mild form of the Zariski theorem for arrangement complements. Stronger versions are known, for instance \cite[Proposition 14]{DP03}. We only need the form stated above, which follows from Zariski's hyperplane-section theorem for complements \cite{Zar37,HL73}; we include the standard argument for the reader's convenience.

\begin{proof}
Write
\[
 M_{\mathbb P}(\mathcal A)=
 \mathbb P(V)\setminus\bigcup_{H\in\mathcal A}\mathbb P(H).
\]
We use the following standard form of Zariski's theorem: if $D\subset\mathbb P^m$ is a projective hypersurface, $m\ge3$, and $P\cong\mathbb P^2$ is a sufficiently general plane, then
\[
 \pi_1(P\setminus(D\cap P))\cong \pi_1(\mathbb P^m\setminus D).
\]
See \cite{Zar37,HL73}. 
We use the standard stratified form of this theorem for hypersurface complements. For hyperplane arrangements, it only requires the plane to be transverse to the arrangement stratification through codimension three; equivalently, it is enough that the plane meets every projectivized flat of rank at most three in the expected codimension. Higher-rank flats impose no additional condition, since any flat of rank greater than three is contained in a rank-three flat.

We first prove the projectivized version. If $\dim U=3$, Zariski's theorem applied to
\[
 D=\bigcup_{H\in\mathcal A}\mathbb P(H)\subset\mathbb P(V)
\]
and to the plane $P = \mathbb P(U)$ gives
\[
 \pi_1(M_{\mathbb P}(\mathcal A_U))
 \cong
 \pi_1(M_{\mathbb P}(\mathcal A)).
\]
Here the $L_2(\mathcal A)$-genericity of $U$ is exactly the required transversality condition for $P$. If $\dim U>3$, choose a three-dimensional subspace $W\subset U$ that is $L_2(\mathcal A_U)$-generic. This is possible by avoiding the finitely many proper Grassmannian conditions imposed by the flats of $\mathcal A_U$ of rank at most three. The essentiality of $\mathcal A_U$ also implies that $\mathcal A_W=(\mathcal A_U)_W$ is essential. Let $X\in L(\mathcal A)$ have rank at most three. Since $U$ is $L_2(\mathcal A)$-generic, the intersection $X\cap U$ has the same codimension in $U$ as $X$ has in $V$, and gives a flat of $\mathcal A_U$ of the same rank. Since $W$ is $L_2(\mathcal A_U)$-generic, it meets $X\cap U$, hence $X$, in the expected codimension. Thus $W$ is $L_2(\mathcal A)$-generic. Applying the three-dimensional case to $W\subset U$ and to $W\subset V$ gives
\[
 \pi_1(M_{\mathbb P}(\mathcal A_W))
 \cong
 \pi_1(M_{\mathbb P}(\mathcal A_U)),
 \qquad
 \pi_1(M_{\mathbb P}(\mathcal A_W))
 \cong
 \pi_1(M_{\mathbb P}(\mathcal A)),
\]
and hence
\[
 \pi_1(M_{\mathbb P}(\mathcal A_U))
 \cong
 \pi_1(M_{\mathbb P}(\mathcal A)).
\]

It remains to pass from projectivized to central affine complements. Choose a hyperplane $H_0\in\mathcal A$ whose restriction to $U$ is nonzero; this is possible because $\mathcal A_U$ is essential. If $\alpha_0$ is a linear defining form for $H_0$, then $\alpha_0$ trivializes both central complements over their projectivized complements:
\[
 M(\mathcal A)\cong M_{\mathbb P}(\mathcal A)\times\C^*,
 \qquad
 M(\mathcal A_U)\cong M_{\mathbb P}(\mathcal A_U)\times\C^*.
\]
Under these compatible splittings, the projectivized isomorphism on $\pi_1$, together with the same extra $\Z=\pi_1(\C^*)$-factor on both sides, proves the lemma.
\end{proof}

\begin{proposition}\label{prop:zariski-section-obstruction}
Let $\mathcal A$ be a central arrangement in a complex vector space $V\cong\C^r$, with $r\ge4$. Let $U\subset V$ be an $L_2(\mathcal A)$-generic hyperplane such that $\mathcal A_U$ is essential. 
\begin{enumerate}
\item If $\pi_1(M(\mathcal A))$ contains a subgroup isomorphic to $\Z^r$, then $M(\mathcal A_U)$ is not a \Kpi.
\item If the central arrangement $\mathcal A$ is essential, then $M(\mathcal A_U)$ is not a \Kpi.
\end{enumerate}
\end{proposition}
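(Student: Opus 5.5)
The plan is to combine Lemma~\ref{lem:zariski-section} with a cohomological-dimension argument. Since $U$ is an $L_2(\mathcal A)$-generic hyperplane with $\mathcal A_U$ essential and $\dim U=r-1\ge3$, Lemma~\ref{lem:zariski-section} applies and gives
\[
 \pi_1(M(\mathcal A_U))\cong\pi_1(M(\mathcal A)).
\]
Now suppose, for contradiction, that $M(\mathcal A_U)$ is a \Kpi. The complement $M(\mathcal A_U)$ is a smooth affine variety of complex dimension $r-1$. By the Andreotti--Frankel theorem, it has the homotopy type of a CW complex of real dimension at most $r-1$. Hence $G=\pi_1(M(\mathcal A_U))$ would have cohomological dimension at most $r-1$, since a \Kpi\ model of dimension $r-1$ exists. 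Cohomological dimension does not increase when passing to subgroups, so every subgroup of $G$ also has cohomological dimension at most $r-1$.

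For part (1), if $\pi_1(M(\mathcal A))\cong G$ contains a copy of $\Z^r$, that subgroup has cohomological dimension $r$, because $T^r$ is a closed aspherical $r$-manifold with $H^r(\Z^r;\Z)\cong\Z\neq0$. This contradicts the bound $r-1$, so $M(\mathcal A_U)$ is not a \Kpi.

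For part (2), it suffices to produce a $\Z^r$ inside $\pi_1(M(\mathcal A))$ when $\mathcal A$ is essential, and then apply (1). Essentiality means the linear forms $\alpha_H$ span $V^*$, so one can choose $r$ hyperplanes $H_1,\dots,H_r$ whose forms are linearly independent. Consider the map
\[
 \Phi=(\alpha_{H_1},\dots,\alpha_{H_r})\colon M(\mathcal A)\to(\C^*)^r.
\]
For the splitting, take a point $v$ very close to $0$ in a generic direction and the torus loops $\theta\mapsto \epsilon e^{i\theta}$ in each coordinate $\alpha_{H_k}$. A cleaner approach is to use a generic central line and the product structure near a vertex, but the simplest route is probably the following. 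The map $\Phi_*$ on $H_1$ is surjective onto $\Z^r$, since the meridians of $H_k$ map to the basis vectors. Moreover, the meridian loops around $H_1,\dots,H_r$ can be chosen near a generic point of the rank-$r$ flat... but that flat is just $0$, where other hyperplanes pass too. So instead I would argue via cohomology: the classes $\omega_k=\frac{1}{2\pi i}d\log\alpha_{H_k}$ generate $H^1$, and by the Orlik--Solomon/Brieskorn description their product $\omega_1\cdots\omega_r\ne0$ in $H^r(M(\mathcal A))$ for independent forms. This shows that $\Phi^*$ is nonzero in top degree $r$.

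This cohomological fact does not by itself give a $\Z^r$ subgroup, however. A better route for (2) is to bypass subgroups and use cohomology directly. If $M(\mathcal A_U)$ were a \Kpi, then $\pi_1$-isomorphism together with the classifying map $M(\mathcal A)\to K(G,1)\simeq M(\mathcal A_U)$ would pull back cohomology. The map $H^*(BG)\to H^*(M(\mathcal A))$ is surjective in degree $1$, and since the Orlik--Solomon algebra is generated in degree $1$, it is surjective in all degrees. Then $H^r(M(\mathcal A))\ne0$ (the top Orlik--Solomon group of an essential rank-$r$ arrangement) forces $H^r(G)\ne0$, contradicting $\operatorname{cd}G\le r-1$.

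This cup-product argument handles (2) directly. By the same token, I would also phrase (1) through the cohomological-dimension bound. The main obstacle is checking that the top Orlik--Solomon degree is nonzero, i.e.\ that the Poincaré polynomial has degree exactly the rank $r$. This is standard, since a product of $r$ independent generators is a basis element of the no-broken-circuit basis.
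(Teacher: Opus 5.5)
Your part (1) is the paper's argument: Lemma~\ref{lem:zariski-section} transfers the $\Z^r$ into $\pi_1(M(\mathcal A_U))$, and the Andreotti--Frankel bound on the homotopy dimension of the affine variety $M(\mathcal A_U)$ gives $\operatorname{cd}\le r-1$, which is a contradiction.

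For part (2) you take a genuinely different route, and it is correct.

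\textbf{The paper's route.} It stays group-theoretic. By induction on $r$ it shows that an essential central rank-$r$ arrangement has $\Z^r\le\pi_1(M(\mathcal A))$. The steps are: decone, using $M(\mathcal A)\cong M(d\mathcal A)\times\C^*$; localize at the point where the images of $r-1$ further independent hyperplanes meet; and use the $\pi_1$-injectivity supplied by the retraction in Proposition~\ref{prop:abelian-localization}. Then (2) becomes a special case of (1).

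\textbf{Your route.} You use asphericity of $M(\mathcal A_U)$ to obtain a map $f\colon M(\mathcal A)\to M(\mathcal A_U)$ inverting $i_*$ on $\pi_1$. Since $H^1(-;\Z)=\operatorname{Hom}(\pi_1,\Z)$ naturally, $f^*$ is an isomorphism on $H^1$. By Brieskorn--Orlik--Solomon, $H^*(M(\mathcal A);\Z)$ is generated in degree one, so $f^*$ is surjective in every degree. This is impossible, because $H^r(M(\mathcal A))\neq 0$ for an essential rank-$r$ arrangement while $H^r(M(\mathcal A_U))=0$.

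\textbf{Comparison.} Your argument needs no induction and no subgroup construction, and (2) no longer passes through (1). The price is importing the structure theorem for the cohomology ring. The paper's argument is self-contained, given the localization lemma it needs anyway.

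\textbf{Two small points to clean up.}
\begin{itemize}
\item In your abandoned draft, the claim that $\omega_1,\dots,\omega_r$ generate $H^1$ is false in general; one needs all $\omega_H$ for $H\in\mathcal A$. You do not use it, so just delete that false start.
\item The no-broken-circuit remark is right only after ordering $\mathcal A$ so that your chosen $r$ independent hyperplanes come first. Then any broken circuit inside that set would force the whole circuit into it, contradicting independence. Alternatively, simply cite that the Poincar\'e polynomial of a central arrangement has degree equal to its rank.
\end{itemize}
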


\begin{proof} Consider the first assertion. By the hypothesis and Lemma~\ref{lem:zariski-section}, the group $\pi_1(M(\mathcal A_U))$ contains a subgroup isomorphic to $\Z^r$. 
On the other hand, $M(\mathcal A_U)$ is an affine variety of complex dimension $r-1$. Therefore, $M(\mathcal A_U)$ has the homotopy type of a CW complex of dimension at most $r-1$, see e.g. \cite{AF59} and \cite[\S 7]{Mil63}. If it were a \Kpi, then $\pi_1(M(\mathcal A_U))$ would have cohomological dimension at most $r-1$. This is impossible, since cohomological dimension is monotone under passage to subgroups, while $\operatorname{cd}(\Z^r)=r$. Therefore $M(\mathcal A_U)$ is not a \Kpi.

The second assertion follows from the first one and the following standard auxiliary claim: if $\mathcal A$ is essential and central of rank $r$, then $\pi_1(M(\mathcal A))$ contains a $\mathbb Z^r$-subgroup. We prove the claim by induction on $r$. The case $r=1$ is immediate. Choose independent hyperplanes
\[
 H_\infty,H_1,\ldots,H_{r-1}\in \mathcal A .
\]
Decone $\mathcal A$ with respect to $H_\infty$, and denote the resulting affine arrangement in $\C^{r-1}$ by $d\mathcal A$. By \cite[Proposition 5.1]{OT92},
\[
 M(\mathcal A)\cong M(d\mathcal A)\times \C^*.
\]
Hence
\[
 \pi_1(M(\mathcal A))
 \cong \pi_1(M(d\mathcal A))\times \Z.
\]
The images of $H_1,\ldots,H_{r-1}$ in the decone meet in a point $p$. The local arrangement $(d\mathcal A)_p$ is an essential and central arrangement of rank $r-1$. The localization argument used in the proof of Proposition~\ref{prop:abelian-localization} gives
\[
 \pi_1(M((d\mathcal A)_p))\hookrightarrow \pi_1(M(d\mathcal A)).
\]
By induction applied to $(d\mathcal A)_p$, the group $\pi_1(M(d\mathcal A))$ contains $\Z^{r-1}$, hence $\pi_1(M(\mathcal A))$
contains $\Z^{r-1}\times \Z\cong \Z^r$.
\end{proof}

This is the form of the obstruction used in the sequel. The following remarks only place it alongside nearby arrangement-theoretic criteria.

\begin{remark}
There are two closely related non-$K(\pi,1)$ criteria. First,
Dimca--Papadima \cite{DP03} proved a non-$K(\pi,1)$ result for sufficiently generic sections of essential $K(\pi,1)$-arrangements. In particular, after the usual essentialization, the discussion preceding \cite[Theorem~16]{DP03}, together with \cite[Theorem~16(i) and Remark~17]{DP03}, implies that if $M(\mathcal A)$ is a \Kpi, $U\subsetneq V$ is an $L_2(\mathcal A)$-generic hyperplane, and $\mathcal A_U$ is essential, then $M(\mathcal A_U)$ is not a \Kpi. 

Second, Jambu--Papadima and Papadima--Suciu developed hypersolvable arrangements and their $\Kpi$ criterion \cite{JP98,PS02}. It follows from their work that if $\mathcal A$ is an essential supersolvable central arrangement in $V$ and $U\subset V$ is an $L_2(\mathcal A)$-generic hyperplane with $\mathcal A_U$ essential, then $M(\mathcal A_U)$ is not a \Kpi. Indeed,  hypersolvability and the hypersolvable length $\ell_{hs}$ are determined by the collection $L_{\le 2}$ of flats of rank $\le 2$ \cite[Section 4.6]{PS02}; a supersolvable arrangement $\mathcal D$ is hypersolvable, with $\ell_{hs}(\mathcal D)=\rank(\mathcal D)$ \cite{JP98};
and, for a hypersolvable arrangement $\mathcal D$, the complement $M(\mathcal D)$ is a \Kpi\ if and only if $\mathcal D$ is
supersolvable \cite[Theorem 1.4]{PS02}. Thus, within the hypersolvable class, $\mathcal D$ is a \Kpi-arrangement if
and only if $\ell_{hs}(\mathcal D)=\rank(\mathcal D)$. The $L_2$-generic restriction identifies the rank-two truncation with that of the ambient arrangement, so the hypersolvable length of the restricted arrangement remains the ambient rank, while its rank drops by one.

In the applications of Proposition~\ref{prop:zariski-section-obstruction} in later sections, the ambient arrangement $\mathcal A$ is either a braid arrangement, or has complement equal to a braid-arrangement complement times $\mathbb C^*$. Hence, the ambient arrangement is both $\Kpi$ and supersolvable. Consequently one can use these two criteria in place of Proposition~\ref{prop:zariski-section-obstruction}.
\end{remark}

\begin{remark}
Finally, Yoshinaga's half-space criterion
\cite[Theorem~5.1(2)]{Y24} gives another useful obstruction to
asphericity for complexified finite central real arrangements of rank at least three. It asserts that the existence
of a sign vector which is locally realizable on every flat of rank at most two
but is not globally realizable forces the complexified complement to be
non-\Kpi. Thus, for a \Kpi~arrangement, every sign vector that is locally
realizable on all flats of rank at most two must also be globally realizable.
Yoshinaga notes that supersolvable, and also simplicial, arrangements
satisfy this condition \cite[Remark~5.2]{Y24}. See also \cite{DDP26} for the relation of this
condition to cleanliness.

For the local arrangements arising in Theorem~\ref{thm:main}, we explicitly
construct a sign vector which is locally realizable on every flat of rank at
most two but is not globally realizable. This gives an alternative proof of
the same local non-asphericity obstruction. We present the verification in
Appendix~\ref{app:half-space}; the main proof below instead uses
Proposition~\ref{prop:zariski-section-obstruction}.
\end{remark}

\section{The primitive components}
\label{sec:primitive}

Following the notation in Section~\ref{sec:fixed}, we first treat $\tau=0$, i.e.\ the case of the primitive components in Theorem~\ref{thm:main}. Put $N=s+t-1$, the number of moving points after deleting the base point, and let
\[
 \mathcal A_N=
 \{x_i=0\}_{i=1}^N\cup\{x_i=x_j\}_{i<j}
 \subset \C^N
\]
be the \emph{marked braid arrangement}. Its complement is the configuration space of $N$ ordered points in $\C^*$. For the corresponding moving coefficient vector $w=(w_1,\ldots,w_N)$, put
\[
 h_w=\sum_{i=1}^N w_i x_i.
\]

We next translate the $L_2$-genericity condition for the marked braid arrangement. A flat of $\mathcal A_N$ is obtained by imposing some equations of the form $x_i=0$ and $x_i=x_j$. So a flat is encoded by a \emph{collision partition} of $\{1,\ldots,N\}$: there may also be a \emph{rooted block} $B_0$, whose coordinates are set equal to $0$ in the flat, and the remaining indices are divided into \emph{moving blocks} $B_1,\ldots,B_q$, where coordinates are equal within each block. The dimension of the flat is $q$, hence its rank is $N-q$. On such a flat, the restriction of $h_w$ is
\[
 \sum_{\alpha=1}^q W_\alpha y_\alpha,
 \qquad
 W_\alpha=\sum_{i\in B_\alpha} w_i.
\]
Consequently, for a positive-dimensional flat, containment in $\{h_w=0\}$ is equivalent to every moving block having total weight zero.

\begin{lemma}\label{lem:primitive-generic}
Assume that one side, positive or negative, of the moving coefficient vector $w$ has cardinality at least $4$. Then the hyperplane $\{h_w=0\}\subset\C^N$ is $L_2(\mathcal A_N)$-generic.
\end{lemma}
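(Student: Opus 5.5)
The plan is to use the description of flats of $\mathcal A_N$ by collision partitions and to count. By the remark after the definition of $L_k$-genericity, it suffices to show that the hyperplane $\{h_w=0\}$ contains no flat of $\mathcal A_N$ of rank at most three. I use throughout that every moving coefficient $w_i$ is nonzero, since each is one of the $\bar m_i$ or one of the $-\bar n_j$. Without loss of generality the positive side $P=\{i: w_i>0\}$ has $|P|\ge 4$; the negative case is symmetric.

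First I dispose of degenerate flats. The rank-zero flat $\C^N$ is not contained in $\{h_w=0\}$, because $w\neq0$. Any flat of rank at most three has dimension $q\ge N-3\ge1$, since $N\ge|P|\ge4$. It is therefore positive-dimensional, and by the computation preceding the lemma, containment in $\{h_w=0\}$ is equivalent to every moving block $B_\alpha$ having $W_\alpha=0$.

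The key step is a lower bound on the rank of any flat all of whose moving blocks have weight zero. Write the rank of the flat encoded by $(B_0;B_1,\ldots,B_q)$ as
\[
 N-q=|B_0|+\sum_{\alpha=1}^q\bigl(|B_\alpha|-1\bigr).
\]
Suppose $W_\alpha=0$ and $B_\alpha$ contains $k_\alpha\ge1$ indices of $P$. Since all weights are nonzero, $B_\alpha$ must also contain at least one negative index, so $|B_\alpha|\ge k_\alpha+1$. Hence $|B_\alpha|-1\ge k_\alpha$, and this inequality holds trivially when $k_\alpha=0$. Every index of $P$ lies either in $B_0$ or in some moving block. Summing over blocks therefore gives
\[
 N-q\;\ge\;|B_0\cap P|+\sum_\alpha k_\alpha\;=\;|P|\;\ge\;4.
\]
So no flat of rank at most three is contained in $\{h_w=0\}$, which is exactly $L_2(\mathcal A_N)$-genericity.

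I expect no serious obstacle. The only points needing care are the nonvanishing of the weights $w_i$, which forces each zero-weight block to contain both signs, and the observation that flats of rank at most three are automatically positive-dimensional, so that the block-weight criterion applies.
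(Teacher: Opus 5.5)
Your proof is correct and is essentially the paper's argument: both reduce to showing that a flat whose moving blocks all have total weight zero cannot have rank at most three, using that the nonzero weights force each such block to contain both signs. The only difference is bookkeeping. The paper bounds $q$ above by the size of the opposite side, which is at most $N-4$, while you bound the rank $N-q$ below by $|P|$ block by block; these are the same count.
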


\begin{proof}
It suffices to show that $\{h_w=0\}$ contains no flat of $\mathcal A_N$ of rank at most $3$. Suppose to the contrary that such a flat exists. Let $q$ be the number of moving blocks in its collision partition. Since one side has cardinality at least $4$, we have $N\ge4$; hence $q\ge N-3\ge1$. Since the flat is contained in $\{h_w=0\}$, every moving block has total weight zero. As all coefficients $w_i$ are nonzero, every moving block contains coefficients of both signs. If one side has cardinality at least $4$, then the opposite side has cardinality at most $N-4$, so there can be at most $N-4$ moving blocks, contradicting $q\ge N-3$.
\end{proof}

\begin{proposition}\label{prop:identity}
If $t\ge4$ and a zero index $a$ is fixed, then the Abel-fiber component $F_a^{z,0}(E)$ is not a \Kpi. Consequently, if $t\ge4$ or $s\ge4$, then the primitive component $\mathcal P_1(\bar\mu)$ of genus-one differentials is not a \Kpi.
\end{proposition}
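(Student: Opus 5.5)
We plan to realize the marked braid arrangement $\mathcal A_N$ with the hyperplane $\{h_w=0\}$ as a tangent localization of the abelian arrangement on $X_a^{z,0}(E)$. Then Proposition~\ref{prop:zariski-section-obstruction} and Proposition~\ref{prop:abelian-localization} give the first claim, and Proposition~\ref{prop:transfer} transfers it to the stratum.

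\textbf{Step 1: the localization.} Use coordinates $(z_i)_{i\in I_a}$ and $(p_j)$ on $E^{I_a}\times E^t$, giving $N=s+t-1$ moving points. Take the layer $L$ of the restricted collision arrangement on $X=X_a^{z,0}(E)$ where all moving points collide with the origin, i.e.\ $L=\{0\}$. At the origin, $T_0(E^{N})=\C^N$ with coordinates $x_1,\dots,x_N$. The collision divisors through $0$ are exactly $z_i=0$, $p_j=0$, $z_i=z_{i'}$, $p_j=p_{j'}$, $z_i=p_j$, and their tangent spaces give the marked braid arrangement $\mathcal A_N$. The tangent space $T_0X$ is the hyperplane $U=\{h_w=0\}$, where $w$ consists of the $\bar m_i$ ($i\in I_a$) and the $-\bar n_j$. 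So the tangent localization of the collision arrangement on $X$ at $L$ is the restriction $(\mathcal A_N)_U$. Two checks are needed here.
\begin{enumerate}
\item No restricted divisor has an extra component through $0$ beyond the tangent one. Each restriction is a translated subtorus intersected with a subtorus, and passing to components through $0$ only gives its tangent hyperplane, which is fine.
\item $L=\{0\}$ is a genuine layer, meaning the restricted arrangement is essential. This holds because $\mathcal A_N$ is essential (the hyperplanes $x_i=0$ alone cut out $0$), so $(\mathcal A_N)_U$ is essential in $U$.
\end{enumerate}

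\textbf{Step 2: the hypotheses of Proposition~\ref{prop:zariski-section-obstruction}(2).} Since $t\ge4$, the negative side of $w$ has cardinality $t\ge4$. Lemma~\ref{lem:primitive-generic} then says $U$ is $L_2(\mathcal A_N)$-generic. We also have $N\ge t\ge4$, $\mathcal A_N$ is central and essential, and $(\mathcal A_N)_U$ is essential. Hence $M((\mathcal A_N)_U)$ is not a \Kpi.

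\textbf{Step 3: conclusion.} Proposition~\ref{prop:abelian-localization}, applied to the abelian arrangement in the subtorus $X$, shows $F_a^{z,0}(E)=M$ is not a \Kpi. Proposition~\ref{prop:transfer} with $\tau=0$ then shows $\mathcal P_1(\bar\mu)$ is not a \Kpi when $t\ge4$. If instead $s\ge4$, use $\mathcal P_1(\bar\mu)\cong\mathcal P_1(-\bar\mu)$, which swaps zeros and poles so that the new $t$ is at least $4$. For the first claim, any zero index $a$ works, since $s\ge1$.

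\textbf{Main obstacle.} The delicate point is Step 1: making sure the local germ of the restricted collision divisors at the origin is exactly $(\mathcal A_N)_U$. Restricting a collision divisor to $X$ might produce several components, possibly tangent to one another. I would handle this by working in the universal cover, where $X$ lifts to the linear subspace $U$ and each collision divisor lifts to a locally finite union of hyperplanes. Only the linear ones pass through $0$, and $U\not\subset\{x_i=x_j\}$ or $\{x_i=0\}$ because $U$ contains no rank-one flat. That matches the argument already used in the proof of Proposition~\ref{prop:abelian-localization}.
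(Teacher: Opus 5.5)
Your proposal is correct and follows essentially the same route as the paper. Both arguments localize at the deepest collision point $\{0\}$, identify the tangent localization with $(\mathcal A_N)_U$ for $U=\{h_w=0\}$, apply Lemma~\ref{lem:primitive-generic} and Proposition~\ref{prop:zariski-section-obstruction}(2), then use Propositions~\ref{prop:abelian-localization} and~\ref{prop:transfer}, and handle $s\ge4$ by zero--pole duality. Your extra check that only the identity component of each restricted collision divisor passes through the origin makes explicit a point the paper leaves implicit.
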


\begin{proof}
Assume first that $t\ge4$, and choose a zero base point of index $a$. The moving coefficient vector is
\[
 w=\bigl((\bar m_i)_{i\ne a},-\bar n_1,\ldots,-\bar n_t\bigr),
 \qquad N=s+t-1,
\]
so its negative side has cardinality $t\ge4$. Let $U=\{h_w=0\}\subset\C^N$. By Lemma~\ref{lem:primitive-generic}, $U$ is $L_2(\mathcal A_N)$-generic. The coordinate hyperplanes restrict to hyperplanes in $U$ with common intersection $\{0\}$, so $(\mathcal A_N)_U$ is essential. Since $\mathcal A_N$ is central and essential of rank $N\ge4$, Proposition~\ref{prop:zariski-section-obstruction} (2) implies that the complement $M((\mathcal A_N)_U)$ is not a \Kpi.

The deepest collision layer in $X_a^{z,0}(E)$ where all moving points collide with the base point is
\[
 L_a^z=\{z_i=0\ (i\in I_a),\ p_1=\cdots=p_t=0\}.
\]
The tangent Abel equation at this layer is $h_w=0$, and the tangent collision hyperplanes are the restrictions of the marked-braid hyperplanes $x_i=0$ and $x_i=x_j$. Thus $(\mathcal A_N)_U$ is precisely the tangent localization of the primitive Abel fiber at $L_a^z$. Proposition~\ref{prop:abelian-localization} implies that $F_a^{z,0}(E)$ is not a \Kpi. Proposition~\ref{prop:transfer} then transfers this fixed-fiber obstruction to the primitive component $\mathcal P_1(\bar\mu)$. Finally, if $s\ge4$, then $\mathcal P_1(\bar\mu)\cong\mathcal P_1(-\bar\mu)$ in genus one, so replacing $\bar\mu$ by $-\bar\mu$ reduces this case to the one just proved.
\end{proof}

\section{The non-primitive components}
\label{sec:nonprimitive}
We now treat the nonzero torsion fibers, which correspond to the non-primitive components. Fix a marked zero or pole as the \emph{base point}, i.e.\ the origin of an elliptic curve $E$. Let
\[
 w=(w_1,\ldots,w_N),
 \qquad N=s+t-1,
\]
be the primitive tuple obtained from $\bar\mu$ after deleting this base point. Thus all $w_i\ne0$, and
\[
 \sum_{i=1}^N w_i=-w_{\mathrm{base}}\ne0.
\]
Define
\[
 \Phi_w\colon E^N\longrightarrow E,
 \qquad
 \Phi_w(x_1,\ldots,x_N)=\sum_{i=1}^N w_i x_i.
\]
Let $\Delta$ be the \emph{marked collision divisor} defined by $x_i=0$ and $x_i=x_j$. Put
\[
 E^*=E\setminus\{0\},
 \qquad
 X_w^*=\Phi_w^{-1}(E^*)\setminus\Delta
       =E^N\setminus\bigl(\Delta\cup\Phi_w^{-1}(0)\bigr).
\]
Thus $X_w^*$ is itself an abelian-arrangement complement. For $\tau\in E^*$, the fiber over $\tau$ is
\[
 F_\tau(E)=\Phi_w^{-1}(\tau)\setminus\Delta.
\]
We use the same notation $\Phi_w$ for the restrictions of this map to the subsets considered below. The following fibration statement does not use the primitivity of $w$, only the preceding integral-coefficient setup.

\begin{lemma}\label{lem:punctured-fibration}
The map
\[
 \Phi_w\colon X_w^*\longrightarrow E^*
\]
is a locally trivial topological fibration.
\end{lemma}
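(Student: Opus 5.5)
We will show local triviality by exploiting the group structure of $E^N$: the fibers of $\Phi_w$ are translates of one another, and since $\sum_i w_i\ne0$ there is a diagonal translation that moves the fibers around. The point is that translating by a diagonal vector $(c,\dots,c)$ shifts $\Phi_w$ by $\sigma c$, where
\[
\sigma=\sum_i w_i=-w_{\mathrm{base}}\ne0,
\]
and preserves every diagonal divisor $x_i=x_j$. It does \emph{not}, however, preserve the coordinate divisors $x_i=0$. So a naive diagonal translation is not enough, and the plan is to absorb the base point into the configuration.

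\textbf{Step 1: change of variables.} Introduce the extra coordinate $x_0$ for the base point and regard a point of $X_w^*$ as an $(N+1)$-tuple $(x_0,x_1,\dots,x_N)$ with $x_0=0$. Use the full weight vector $(w_{\mathrm{base}},w_1,\dots,w_N)$, whose sum is $0$. Consider
\[
Y=\Bigl\{(x_0,\dots,x_N)\in E^{N+1}: x_i\ne x_j\text{ for } i\ne j,\ \textstyle\sum_{i\ge0} w_i x_i\ne0\Bigr\}.
\]
The diagonal group $E$ acts freely on $Y$: the action preserves the collision divisors, and it preserves the full weighted sum because that sum has coefficients adding to $0$. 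The slice $x_0=0$ identifies $Y/E$ with $X_w^*$, and under this identification $\Phi_w$ corresponds to the map
\[
\Psi(x)=\sum_{i\ge0} w_i x_i,
\]
which is $E$-invariant.

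\textbf{Step 2: move fibers of $\Psi$.} The key step is to find, for $\tau$ near $\tau_0\in E^*$, a homeomorphism from $\Psi^{-1}(\tau_0)$ to $\Psi^{-1}(\tau)$ that preserves the collision divisors and depends continuously on $\tau$. Translation by a single coordinate breaks collisions, so instead use a scaling in the universal cover: lift to $\C^{N+1}$ modulo the lattice $\Lambda^{N+1}$.

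Here the case $\tau$ of finite order suggests a cleaner approach. The fiber $\Psi^{-1}(\tau)$ is a translate of the subtorus $\ker\Psi$, and the collision arrangement restricted to it is a subtorus arrangement whose combinatorial type might change with $\tau$. Whether it can change is exactly the main obstacle. A layer $\{x_i=x_j \text{ for } i,j \text{ in the same block}\}$ meets $\Psi^{-1}(\tau)$ in the set of points of the form $\sum_\alpha W_\alpha y_\alpha=\tau$. If some block $B_\alpha$ has $W_\alpha\ne0$, this intersection is a translate of a fixed subtorus, with components indexed by $\tau$-independent data, namely the torsion of $\gcd(W_\alpha)$, so its type is constant in $\tau$. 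If every block has $W_\alpha=0$, the intersection is empty for $\tau\ne0$ and everything for $\tau=0$. Since $\tau$ stays in $E^*$, only the first behaviour occurs along paths. Hence the arrangement type is constant over $E^*$.

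\textbf{Step 3: realize constancy by an isotopy.} Work locally on $\tau\in D\subset E^*$, a small disc. Pick a lift and choose a real-linear vector $v\in\C^{N+1}$ with $\Psi(v)=1$; all affine subtori involved are translates of linear subtori by vectors in the real span of the lattice plus $v$-directions. A cleaner construction is the following. For each $\tau\in D$ we want a translation $T_\tau$ of $E^{N+1}$ carrying $\Psi^{-1}(\tau_0)$ to $\Psi^{-1}(\tau)$ and preserving the diagonals. Translation by $u\in E^{N+1}$ preserves all $x_i=x_j$ only if $u$ is diagonal, which does not change $\Psi$. So pure translations fail, and I expect one must instead use Thom's first isotopy lemma. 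The map $\Psi\colon \overline{Y}=E^{N+1}\to E$, stratified by the collision arrangement, is proper. Step 2 shows that over $E^*$ each stratum, which is an intersection of a layer with a fiber, is a submersion onto $E^*$: the linear map $\sum_\alpha W_\alpha y_\alpha$ is surjective whenever some $W_\alpha\ne0$, and strata with all $W_\alpha=0$ lie over $0$ only. Whitney conditions hold since the strata come from a normal-crossing-type linear arrangement of subtori. Thom's lemma then gives local triviality of $\Psi$ over $E^*$ on the complement of the arrangement, and it commutes with the free $E$-action after restricting to the slice $x_0=0$, which is a stratified-submersive section. This yields the lemma. The main obstacle is verifying the submersion condition on all strata, and that is exactly the block-weight computation of Step 2.
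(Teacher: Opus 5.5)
Your Step 3 uses the same mechanism as the paper. You apply Thom's first isotopy lemma to a proper map and check the submersion condition on each collision stratum via the block weights $W_\alpha$; strata on which all block weights vanish lie over $0$. The paper does this directly on $E^N$. It stratifies by collision partitions with a rooted block (the indices with $x_i=0$), and that block contributes nothing to $\Phi_w$. You instead apply Thom's lemma to $\Psi\colon E^{N+1}\to E$ and then have to return to $X_w^*$. That return step is not justified as written.

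Thom's lemma gives \emph{some} trivialization of $\Psi^{-1}(D)\cap Y\to D$, obtained by integrating controlled vector fields. Nothing makes it $E$-equivariant or tangent to the slice $\{x_0=0\}$. So it does not ``commute with the free $E$-action'', and it does not restrict to a trivialization of the slice. You also cannot recover the statement abstractly from $Y\cong E\times X_w^*$. Knowing that the fibers $E\times F_\tau$ vary trivially does not by itself show that the $F_\tau$ vary trivially.

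The repair is short and makes the detour through $E^{N+1}$ unnecessary:
\begin{enumerate}
\item The slice $S_0=\{x_0=0\}\cong E^N$ is transverse to every stratum, since the strata are invariant under the diagonal $E$-action and $S_0$ is transverse to the orbits.
\item Hence $S_0$ inherits a Whitney stratification. It is exactly the paper's collision stratification, with the block containing $x_0$ as the rooted block.
\item Since $\Psi$ is $E$-invariant, $d\Psi$ kills the orbit direction. So $\Psi$ restricted to each sliced stratum has the same differential image as on the full stratum, and is still a submersion over $E^*$.
\item Applying Thom's lemma on $S_0$ itself then gives the lemma, exactly as in the paper.
\end{enumerate}
Alternatively you could choose $E$-invariant control data and average the controlled vector fields, but that is more work.

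Two smaller points:
\begin{itemize}
\item The diagonal arrangement is not of normal-crossing type; for example, $x_0=x_1$, $x_1=x_2$ and $x_0=x_2$ meet along a codimension-two locus. Whitney regularity still holds, because the arrangement is locally a central hyperplane arrangement stratified by its flats.
\item The discussion in Step 2 of constancy of the combinatorial type is superseded by the stratum-wise submersion check and can be dropped.
\end{itemize}
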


\begin{proof}
We first describe the \emph{collision stratification} on $E^N$ induced by the divisor $\Delta$ as follows. A \emph{collision partition} records which of the moving points $x_1,\ldots,x_N$ are equal to one another, and which of them are equal to the base point $0\in E$. Thus a stratum is specified by a partition of the index set $\{1,\ldots,N\}$, together with the possibility that one block is declared to be the rooted block. The rooted block, denoted $B_0$, consists of those indices $i$ for which $x_i=0$. The remaining blocks are called moving blocks. If $B_\alpha$ is a moving block, all coordinates $x_i$ with $i\in B_\alpha$ are equal to a common point $y_\alpha\in E$, and the variables $y_\alpha$ are distinct from each other and from $0$.

On such a collision stratum, the Abel map has the form
\[
 \Phi_w=\sum_\alpha W_\alpha y_\alpha,
 \qquad
 W_\alpha=\sum_{i\in B_\alpha} w_i,
\]
where the sum is over the moving blocks. The rooted block contributes nothing, because its common coordinate is $0$.

We claim that, on every collision stratum which meets $\Phi_w^{-1}(E^*)$, the restriction of $\Phi_w$ is a submersion. Indeed, if at least one moving-block coefficient $W_\alpha$ is nonzero, then the map
\[
 (y_\alpha)_\alpha\longmapsto \sum_\alpha W_\alpha y_\alpha
\]
is a submersion: multiplication by a nonzero integer on an elliptic curve is an \'etale covering, hence has surjective differential. On the other hand, if $W_\alpha=0$ for every moving block, then $\Phi_w$ is identically zero on the stratum. Such a stratum is contained in $\Phi_w^{-1}(0)$, and therefore does not meet $\Phi_w^{-1}(E^*)$.

Now fix a point $\tau\in E^*$, and choose a sufficiently small open disk $U\subset E^*$ containing $\tau$. Since $E^N$ is compact, $\Phi_w\colon E^N\to E$ is proper, and hence its restriction
\[
 \Phi_w\colon\Phi_w^{-1}(U)\longrightarrow U
\]
is proper. Moreover, because $U\cap\{0\}=\emptyset$, the hypersurface $\Phi_w^{-1}(0)$ does not meet $\Phi_w^{-1}(U)$.

Consider the stratification of the pair
\[
 \bigl(\Phi_w^{-1}(U),\,
 \Delta\cap \Phi_w^{-1}(U)\bigr)
\]
induced by the collision stratification, which is a Whitney stratification. By the preceding paragraphs, every stratum is mapped submersively to $U$. Together with properness, Thom's first isotopy lemma gives local topological triviality of the stratified pair
\[
 \bigl(\Phi_w^{-1}(U),\,\Delta\cap \Phi_w^{-1}(U)\bigr)
 \longrightarrow U.
\]
Taking complements of the stratified divisor $\Delta$ in this locally trivial family of pairs, which preserves $\Delta$, we obtain a local topological trivialization of
\[
 \Phi_w^{-1}(U)\setminus \Delta
 \longrightarrow U.
\]
Since
\[
 \Phi_w^{-1}(U)\setminus \Delta
 =
 X_w^*\cap \Phi_w^{-1}(U),
\]
this proves that $\Phi_w\colon X_w^*\to E^*$ is locally topologically trivial near $\tau$. As $\tau\in E^*$ was arbitrary, the lemma follows.
\end{proof}

\begin{example}
Take $N=2$ and $w=(1,1)$. Then
\[
 \Phi_w(x_1,x_2)=x_1+x_2,
 \qquad
 X_w^*=\{x_1,x_2\ne0,\ x_1\ne x_2,\;
 x_1+x_2\ne0\}.
\]
For $\tau\in E^*$, the fiber is described by writing
$x_2=\tau-x_1$. Thus
\[
 F_\tau(E)\cong E\setminus\bigl(\{0,\tau\}\cup [2]^{-1}(\tau)\bigr),
\]
where $[2]\colon E\to E$ is multiplication by $2$. The deleted points
$0$ and $\tau$ correspond to $x_1=0$ and $x_2=0$, while the four
points of $[2]^{-1}(\tau)
=\{\tau/2+\eta:\eta\in E[2]\}$ correspond to the collision condition
$x_1=x_2$. As $\tau$ varies in a sufficiently small disk in $E^*$,
these six deleted points move continuously and remain distinct. Isotopy
extension therefore identifies the punctured elliptic curves over nearby
values of $\tau$, giving a direct model for the local triviality of
$\Phi_w\colon X_w^*\to E^*$ in this simple case.
\end{example}

At the origin, the tangent localization of the deleted abelian arrangement $\Delta\cup\Phi_w^{-1}(0)$ is the complexification of
\[
 B(w)=
 \{x_i=0\}_{i=1}^N
 \cup
 \{x_i=x_j\}_{i<j}
 \cup
 \{h_w=0\}
 \subset\R^N.
\]
The origin is not a point of the complement, but it is a layer of the deleted abelian arrangement; hence Proposition~\ref{prop:abelian-localization} applies to this tangent localization. We denote by $M(B(w)_\C)$ the complement of all hyperplanes in this complex arrangement.

Consider the product arrangement
\[
 \widehat{\mathcal A}_{N}=
 \{x_i=0\}_{i=1}^N\cup\{x_i=x_j\}_{i<j}\cup\{u=0\}
 \subset \C^N\times\C_u.
\]
Its complement is $M(\mathcal A_N)\times\C^*$. Let
\[
 U_w=\{u=h_w(x_1,\ldots,x_N)\}
 \subset \C^N\times\C_u.
\]
After identifying $U_w\cong\C^N$ by the coordinates $x_i$, the restricted arrangement $(\widehat{\mathcal A}_N)_{U_w}$ is exactly $B(w)_\C$.

\begin{lemma}\label{lem:nonprimitive-generic}
Assume that one side of the coefficient vector $w$ has cardinality at least $3$. Then $U_w$ is $L_2(\widehat{\mathcal A}_N)$-generic.
\end{lemma}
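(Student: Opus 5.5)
Since $U_w$ is a hyperplane in $\C^N\times\C_u$, the observation preceding Lemma~\ref{lem:primitive-generic} reduces $L_2(\widehat{\mathcal A}_N)$-genericity to showing that $U_w$ contains no flat of $\widehat{\mathcal A}_N$ of rank at most $3$. I will argue by contradiction: let $Y$ be such a flat with $Y\subset U_w$, and describe it explicitly.

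First I describe the flats of $\widehat{\mathcal A}_N$. Since the arrangement is the product of $\mathcal A_N$ with the single hyperplane $\{u=0\}$, every flat has the form $X$ or $X\times\{0\}$, where $X$ is a flat of $\mathcal A_N$ encoded by a collision partition with rooted block $B_0$ and moving blocks $B_1,\ldots,B_q$. The rank is $N-q$ in the first case and $N-q+1$ in the second.

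In the first case, $Y=X\times\C_u$ contains the vector $(0,1)$, which does not satisfy $u=h_w(x)$. Hence $Y\not\subset U_w$ automatically.

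In the second case, $Y=X\times\{0\}\subset U_w$ means that $h_w$ vanishes identically on $X$. By the block formula, this holds exactly when every moving block has total weight $W_\alpha=0$. Since all $w_i\ne0$, each moving block must then contain indices of both signs. The rank condition $N-q+1\le 3$ gives $q\ge N-2$.

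The key counting step is this. If one side of $w$ has cardinality at least $3$, the opposite side has cardinality at most $N-3$. Each moving block uses at least one index from that opposite side, so $q\le N-3$, which contradicts $q\ge N-2$.

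I should check two edge cases: that $q\ge1$ is not needed for the argument, and that $N\ge3$ holds. The counting gives $q\le N-3$ directly, so the first point takes care of itself. If $q=0$, then $N\le 2$, which is impossible since one side alone has at least $3$ elements. Hence $Y$ cannot exist.

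I expect no real obstacle. The only point needing care is the bookkeeping of the extra $u$-coordinate, which shifts the rank by one and explains why the threshold drops from $4$ in Lemma~\ref{lem:primitive-generic} to $3$ here.
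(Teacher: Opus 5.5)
Your proof is correct and follows the paper's argument essentially verbatim: flats of the form $X\times\C_u$ cannot lie in the graph $U_w$, and for a flat $X\times\{0\}$ the rank bound $q\ge N-2$ contradicts the count $q\le N-3$ of zero-sum moving blocks. One minor slip: in your initial description of flats, the first type should read $X\times\C_u$ rather than $X$, which is how you use it afterwards.
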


\begin{proof}
Since $U_w$ is a hyperplane, we need to show that it contains no flat of $\widehat{\mathcal A}_N$ of rank at most $3$. A flat of $\widehat{\mathcal A}_N$ is the product of a flat $X$ of $\mathcal A_N$ with either $\C_u$ or $\{0\}$. No flat of the form $X\times\C_u$ is contained in the graph $U_w$. Thus a flat contained in $U_w$ must have the form $X\times\{0\}$, and this containment is equivalent to $h_w|_X=0$.

If $X\times\{0\}$ has rank at most $3$, then $X$ has rank at most $2$. Let $q$ be the number of moving blocks in the collision partition corresponding to $X$. Since $\operatorname{rank} X=N-q$, we have $q\ge N-2$. Since $h_w|_X=0$, every moving block has total weight zero, and hence contains coefficients of both signs. If one side of $w$ has cardinality at least $3$, then the opposite side has cardinality at most $N-3$. Each zero-sum moving block contains at least one coefficient from this opposite side, so there can be at most $N-3$ such blocks, contradicting $q\ge N-2$.
\end{proof}

\begin{proposition}\label{prop:Bw}
Assume that one side of the coefficient vector $w$ has cardinality at least $3$. Then $M(B(w)_\C)$ is not a \Kpi.
\end{proposition}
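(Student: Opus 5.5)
We apply Proposition~\ref{prop:zariski-section-obstruction}(1) to the ambient arrangement $\widehat{\mathcal A}_N\subset V=\C^N\times\C_u$, which has rank $r=N+1$, and to the hyperplane $U=U_w$. After identifying $U_w\cong\C^N$, the restricted arrangement $(\widehat{\mathcal A}_N)_{U_w}$ is exactly $B(w)_\C$. The goal is therefore to verify the hypotheses of that proposition; the conclusion $M(B(w)_\C)$ not a \Kpi\ then follows directly.

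\emph{Dimension.} If one side of $w$ has cardinality at least $3$, then the other side is nonempty. Indeed, $w$ comes from a genus-one signature, and deleting a single base point still leaves points on both sides unless that side had only the base point; this needs a small check against the sign conventions. So $N\ge4$ and $r=N+1\ge5\ge4$. If the other side could be empty, we have at least $N\ge3$, which still gives $r\ge4$. So the dimension bound holds in all cases.

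\emph{Genericity.} $U_w$ is $L_2(\widehat{\mathcal A}_N)$-generic by Lemma~\ref{lem:nonprimitive-generic}.

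\emph{Essentiality of the restriction.} The coordinate hyperplanes $x_i=0$ restrict to the coordinate hyperplanes of $U_w\cong\C^N$. Their common intersection is the origin, so $B(w)_\C$ is essential.

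\emph{Essentiality of the ambient arrangement.} $\widehat{\mathcal A}_N$ is central. It is also essential, because the $N$ hyperplanes $x_i=0$ together with $u=0$ cut out the origin.

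Part (2) of Proposition~\ref{prop:zariski-section-obstruction} now applies verbatim. Alternatively, for part (1), the explicit subgroup $\Z^{N+1}$ is visible directly: $M(\widehat{\mathcal A}_N)=M(\mathcal A_N)\times\C^*$, and $\pi_1(M(\mathcal A_N))$ contains $\Z^N$ by the inductive claim in that proof.

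The only step needing care is confirming that the $L_2$-genericity of Lemma~\ref{lem:nonprimitive-generic}, which is stated for flats of rank at most three, matches the hypothesis of Lemma~\ref{lem:zariski-section} exactly. It does, since $U_w$ is a hyperplane. I would also double-check that $N\ge3$ suffices for $\dim V\ge4$ and $\dim U\ge3$ in Lemma~\ref{lem:zariski-section}. Here $\dim V=N+1\ge4$ and $\dim U=N\ge3$, so both bounds hold. No further obstacle is expected.
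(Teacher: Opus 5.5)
Your proposal is correct and follows the paper's proof. You check the same hypotheses: $L_2$-genericity via Lemma~\ref{lem:nonprimitive-generic}, $\dim V=N+1\ge4$, and essentiality of both $\widehat{\mathcal A}_N$ and its restriction, and you conclude via Proposition~\ref{prop:zariski-section-obstruction}(2); your part~(1) route is an equally valid alternative. One correction: your opening claim that the other side is always nonempty, so that $N\ge4$, is false — e.g.\ $\mu=(6;-2,-2,-2)$ with the zero as base point gives $w=(-1,-1,-1)$ and $N=3$. Your fallback $N\ge3$ is all that is needed, exactly as in the paper.
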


\begin{proof}
By Lemma~\ref{lem:nonprimitive-generic}, the graph hyperplane $U_w$ is $L_2(\widehat{\mathcal A}_N)$-generic. Since one side of $w$ has cardinality at least three, we have $N\ge3$, so the ambient vector space $\C^N\times\C_u$ has dimension $N+1\ge4$. The arrangement $\widehat{\mathcal A}_N$ is central and essential, because the hyperplanes $x_i=0$ and $u=0$ have common intersection equal to the origin. The restricted arrangement is essential as well: after identifying $U_w\cong\C^N$, the restrictions of the hyperplanes $x_i=0$ have common intersection equal to the origin. Proposition~\ref{prop:zariski-section-obstruction} (2) applied to $\widehat{\mathcal A}_N$ and $U_w$ gives that the complement of the restricted arrangement is not a \Kpi. Identifying this restricted arrangement with $B(w)_\C$, we obtain the claim.
\end{proof}

Now we can complete the proof of Theorem~\ref{thm:main} for the case of non-primitive components.

\begin{proposition}\label{prop:nonzero}
Assume that, after choosing a base point, one side of the moving coefficient vector $w$ has at least three elements. Then every nonzero torsion fiber $F_\tau(E)$, $\tau\ne0$, is not a \Kpi. Consequently, if $t\ge3$ or $s\ge3$, then every non-primitive component $\mathcal P_1(d\bar\mu)$ of genus-one differentials with $d\ge2$ is not a \Kpi.
\end{proposition}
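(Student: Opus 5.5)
We will argue by contradiction: assuming a nonzero torsion fiber $F_\tau(E)$ is a \Kpi, we deduce that $X_w^*$ is a \Kpi, which contradicts Proposition~\ref{prop:Bw}. The transfer to strata is then handled by Proposition~\ref{prop:transfer}. The two ingredients are the fibration of Lemma~\ref{lem:punctured-fibration} and the localization statement Proposition~\ref{prop:abelian-localization}.

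\emph{Step 1: fiber to total space.} By Lemma~\ref{lem:punctured-fibration}, $\Phi_w\colon X_w^*\to E^*$ is a locally trivial fibration over the connected base $E^*$. Its fiber over $\tau$ is $F_\tau(E)$, and all fibers are homeomorphic. The base $E^*$ is a once-punctured torus, hence homotopy equivalent to a wedge of two circles and aspherical. If $F_\tau(E)$ were a \Kpi, the long exact sequence \eqref{eq:fibration-les} would give $\pi_i(X_w^*)=0$ for $i\ge2$, so $X_w^*$ would be a \Kpi. The connectedness of the fiber holds because a nonempty complement of a proper analytic subset in a connected translate of a subtorus is connected. Nonemptiness for $\tau\ne0$ follows since one side having at least three elements forces $N\ge3$, and we can choose distinct nonzero points with prescribed weighted sum.

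\emph{Step 2: localization.} The space $X_w^*=E^N\setminus(\Delta\cup\Phi_w^{-1}(0))$ is the complement of a finite abelian arrangement in $E^N$. Here $\Phi_w^{-1}(0)$ is decomposed into its connected codimension-one components, all of which are translates of subtori with the same tangent hyperplane $\{h_w=0\}$. The origin is a layer, and its tangent localization is $B(w)_\C$. We must check that only the component of $\Phi_w^{-1}(0)$ through the origin contributes, and that it contributes the single hyperplane $h_w=0$; this holds since the other components are disjoint translates. Proposition~\ref{prop:abelian-localization} then shows that $X_w^*$ being a \Kpi\ forces $M(B(w)_\C)$ to be a \Kpi. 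This contradicts Proposition~\ref{prop:Bw}, which applies because one side of $w$ has at least three elements. Hence $F_\tau(E)$ is not a \Kpi\ for every $\tau\ne0$.

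\emph{Step 3: transfer to strata.} Suppose $s\ge3$ or $t\ge3$, and consider a non-primitive component $\mathcal P_1(d\bar\mu)$ with $d\ge2$. If $t\ge3$, take a zero as base point. Then $w=((\bar m_i)_{i\ne a},-\bar n_1,\dots,-\bar n_t)$ has negative side of size $t\ge3$, and $F_\tau(E)$ coincides with $F_a^{z,\tau}(E)$, where $\tau$ is a torsion point of exact order $d\ge2$. Proposition~\ref{prop:transfer} then shows the component with that torsion datum is not a \Kpi. If $s\ge3$, we use $\mathcal P_1(\mu)\cong\mathcal P_1(-\mu)$ to swap zeros and poles. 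Alternatively, we can take a pole as base point, since Proposition~\ref{prop:transfer} is symmetric under this swap.

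The expected main obstacle is Step 2: correctly identifying the tangent localization at the origin when $\Phi_w^{-1}(0)$ is disconnected for non-primitive $w$. Primitivity of $w$ actually makes $\Phi_w^{-1}(0)$ connected, so this should cause no trouble here.
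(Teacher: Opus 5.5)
Your proposal is correct and follows essentially the same route as the paper: Proposition~\ref{prop:Bw} combined with Proposition~\ref{prop:abelian-localization} at the origin shows $X_w^*$ is not a \Kpi; the fibration of Lemma~\ref{lem:punctured-fibration} over the aspherical base $E^*$ then shows no fiber is a \Kpi; and Proposition~\ref{prop:transfer}, with a zero base point when $t\ge3$ and a pole base point or duality when $s\ge3$, passes this to $\mathcal P_1(d\bar\mu)$. You arrange it as a contradiction in the reverse order and add correct side checks (nonemptiness and connectedness of fibers, and the components of $\Phi_w^{-1}(0)$); these checks are immediate because $w$ is primitive.
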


\begin{proof}
By Proposition~\ref{prop:Bw}, the complement $M(B(w)_\C)$ of the origin tangent localization is not a \Kpi. Since $B(w)_\C$ is the tangent localization at the origin of the deleted abelian arrangement $\Delta\cup\Phi_w^{-1}(0)$, whose complement is $X_w^*$, Proposition~\ref{prop:abelian-localization} implies that $X_w^*$ is not a \Kpi.

By Lemma~\ref{lem:punctured-fibration}, $X_w^*\to E^*$ is a locally trivial fibration over the connected base $E^*$. The base $E^*$ is a \Kpi. If one fiber were a \Kpi, then all fibers would be homeomorphic to a \Kpi; by \eqref{eq:fibration-les}, $X_w^*$ would be a \Kpi, contradicting the preceding paragraph. Hence every fiber over $E^*$, and in particular every nonzero torsion fiber, is not a \Kpi.

Now consider a non-primitive component $\mathcal P_1(d\bar\mu)$ with $d\ge2$. By the discussion in Section~\ref{sec:fixed}, it is covered by fixed-elliptic-curve fibers with torsion value $\tau$ of exact order $d$, hence $\tau\ne0$. If $t\ge3$, choose a zero base point; then the negative side of the moving coefficient vector has cardinality $t$, so the first part applies. If $s\ge3$, the same conclusion follows after zero-pole duality, or equivalently after choosing a pole base point. Finally, Proposition~\ref{prop:transfer} transfers the fixed-fiber obstruction to the corresponding connected component of the stratum of genus-one differentials. This proves the desired claim.
\end{proof}

\appendix

\section{A half-space proof of the local obstructions}
\label{app:half-space}

This appendix gives an alternative proof of the two local arrangement obstructions used in Sections~\ref{sec:primitive} and~\ref{sec:nonprimitive}, based on Yoshinaga's half-space criterion \cite{Y24}. It is logically independent of Proposition~\ref{prop:zariski-section-obstruction}; the localization and transfer arguments are unchanged from the main text.

Let $\mathcal A=\{H_i\}_{i\in I}$ be a finite central real arrangement in a real vector space $V$, with fixed defining forms $H_i=\ker(\alpha_i)$. A \emph{sign vector} is a choice $\epsilon=(\epsilon_i)_{i\in I}\in\{\pm1\}^I$, determining the strict half-space system
\[
 \epsilon_i\alpha_i(x)>0,
 \qquad i\in I.
\]
It is \emph{globally realizable} if this system has a solution in $V$. If $X$ is a flat, the localization $\mathcal A_X$ consists of the hyperplanes containing $X$. The sign vector is \emph{locally realizable at $X$} if the subsystem with $H_i\supset X$ is realizable. We say that $\epsilon$ is \emph{locally realizable up to rank $k$} if this holds for every flat $X$ with $\operatorname{codim}_V X\le k$.

Yoshinaga states the following criterion for central essential arrangements \cite[Theorem~5.1(2)]{Y24}. The form below allows a non-essential arrangement as well, by quotienting the ambient space by the common intersection of all hyperplanes.

\begin{theorem}[Yoshinaga]
\label{thm:yoshinaga-half-space}
If a finite central real arrangement $\mathcal A$ of rank at least three has a sign vector that is locally realizable up to rank two but is not globally realizable, then the complement of the complexification $\mathcal A_\C$ is not a \Kpi.
\end{theorem}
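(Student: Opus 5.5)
The plan is to reduce to the essential case and then invoke \cite[Theorem~5.1(2)]{Y24} as stated there. Let $\mathcal A=\{H_i\}_{i\in I}$ be central in the real vector space $V$, and put $K=\bigcap_{i\in I}H_i$ and $\bar V=V/K$. Each defining form $\alpha_i$ vanishes on $K$, so it descends to a linear form $\bar\alpha_i$ on $\bar V$. The arrangement $\bar{\mathcal A}=\{\ker\bar\alpha_i\}$ is then central and essential in $\bar V$, and its rank equals $\dim\bar V=\rank\mathcal A\ge3$.

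\textbf{Step 1: realizability is unchanged.} I will check that the sign-vector data are the same for $\mathcal A$ and $\bar{\mathcal A}$.
\begin{enumerate}
\item Global realizability agrees, because a point $x\in V$ solves $\epsilon_i\alpha_i(x)>0$ for all $i$ exactly when its image $\bar x$ solves $\epsilon_i\bar\alpha_i(\bar x)>0$. Conversely, any solution in $\bar V$ lifts to $V$.
\item Flats correspond. Every flat $X$ of $\mathcal A$ contains $K$, and $X\mapsto X/K$ is an order-preserving bijection onto the flats of $\bar{\mathcal A}$. It preserves codimension, $\operatorname{codim}_VX=\operatorname{codim}_{\bar V}(X/K)$. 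It also preserves the index set $\{i: H_i\supset X\}$.
\item Hence local realizability up to rank two transfers verbatim, since each local subsystem is the same system of strict inequalities read in $V$ or in $\bar V$.
\end{enumerate}

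\textbf{Step 2: the complexified complements agree up to homotopy.} Choosing a complement $\bar V\cong V'\subset V$ of $K$ splits $V_\C=V'_\C\oplus K_\C$. Every complexified hyperplane contains $K_\C$, so
\[
M(\mathcal A_\C)\cong M(\bar{\mathcal A}_\C)\times K_\C .
\]
Since $K_\C$ is contractible, $M(\mathcal A_\C)$ is a $K(\pi,1)$ if and only if $M(\bar{\mathcal A}_\C)$ is.

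\textbf{Step 3: apply Yoshinaga.} Now apply \cite[Theorem~5.1(2)]{Y24} to the central essential arrangement $\bar{\mathcal A}$ of rank at least three, using the sign vector $\epsilon$ transported in Step 1. This gives that $M(\bar{\mathcal A}_\C)$ is not a \Kpi, and by Step 2 neither is $M(\mathcal A_\C)$.

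\textbf{Where the difficulty lies.} The reduction itself is routine; the real content is entirely in Yoshinaga's theorem, which we take as given. If one wanted to reprove it, I would first build the Salvetti complex of $\bar{\mathcal A}$ and view $\epsilon$ as a would-be chamber. Local realizability at rank $\le2$ flats lets one define compatible data on the $2$-skeleton, namely the positive-path and gallery relations from the Deligne/Paris-type presentation of $\pi_1$. The failure of global realizability then produces a nontrivial obstruction in $\pi_2$ of the universal cover. That step, producing and detecting the nontrivial spherical class, is the main obstacle.
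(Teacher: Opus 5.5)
Your proposal is correct and follows the same route as the paper. The paper simply cites Yoshinaga's Theorem~5.1(2) for central essential arrangements and remarks that the non-essential case follows by quotienting the ambient space by the common intersection of all hyperplanes; your Steps 1--3 spell out exactly this reduction, and the speculative closing paragraph on reproving Yoshinaga's theorem is not needed.
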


We also use the following elementary homogeneous form of Gordan's alternative: for linear forms $\ell_1,\ldots,\ell_m$ on a real vector space, the strict system $\ell_i(x)>0$ has no solution if and only if there are coefficients $c_i\ge0$, not all zero, such that $\sum_i c_i\ell_i=0$; see \cite[Section~7.8]{Sch86}.

Let $V$ be the ambient real coordinate space of a marked-braid arrangement. For a subcollection of hyperplanes $x_i=0$ and $x_i=x_j$, let $G$ be the graph with vertices the coordinate symbols and the root $0$ that occur in the equations, and with an edge for each equation. For every connected component $\Gamma$ of $G$, the span of its edge forms has dimension $|\Gamma|-1$, where the root is counted as a vertex if it lies in $\Gamma$. If $\Gamma$ does not contain the root, every form in this span has total coefficient sum zero on the coordinates in $\Gamma$; we call such a component a \emph{moving component}.

\subsection*{The primitive local arrangement}

Assume $t\ge4$ and choose a zero base point as in Proposition~\ref{prop:identity}. Write the real coordinates as $(y_i)_{i\in I_a}$ for the non-base zeros and $(x_j)_{j=1}^t$ for the poles. The primitive local arrangement is the restriction of the marked-braid arrangement
\[
 y_i=0,
 \quad x_j=0,
 \quad y_i=y_{i'},
 \quad x_j=x_{j'},
 \quad y_i=x_j
\]
to the real Abel hyperplane
\[
 V_a^z=\left\{\sum_{j=1}^t \bar n_j x_j-
             \sum_{i\in I_a}\bar m_i y_i=0\right\}.
\]
Choose an arbitrary order of the $y_i$'s and consider the sign vector on the restricted arrangement determined by
\[
 x_1>x_2>\cdots>x_t>y_{i_1}>\cdots>y_{i_{s-1}}>0.
\]
If $I_a=\emptyset$, the middle $y$-part is omitted, so the chain is simply $x_1>\cdots>x_t>0$.

\begin{lemma}\label{lem:yosh-primitive-local}
This sign vector on $V_a^z$ is locally realizable up to rank two but is not globally realizable. Hence the complement of this restricted marked-braid arrangement, equivalently the primitive tangent localization, is not a \Kpi.
\end{lemma}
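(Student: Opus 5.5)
Since all coefficients of the Abel form are positive on the pole side and positive on the zero side, the chain $x_1>\cdots>x_t>y_{i_1}>\cdots>y_{i_{s-1}}>0$ forces every coordinate to be positive. This incompatibility with the Abel relation is the source of the failure of global realizability. Local realizability is checked flat by flat, using the graph description of rank-$\le 2$ flats together with Gordan's alternative. At the end I apply Theorem~\ref{thm:yoshinaga-half-space}, noting that the restricted arrangement has rank $N-1=s+t-2\ge3$.

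\emph{Global non-realizability.} On $V_a^z$, a realization would be a point with $x_1>\cdots>x_t>y_{i_1}>\cdots>0$. In particular all $x_j,y_i>0$ and $x_j>y_i$ for all $i,j$. The Abel relation gives $\sum_j \bar n_j x_j=\sum_i \bar m_i y_i$. Since $\sum_j\bar n_j=\sum_{i}\bar m_i$ (including $\bar m_a$), we have $\sum_{i\in I_a}\bar m_i\le\sum_j\bar n_j-1$. Hence
\[
\sum_i\bar m_iy_i<\Bigl(\sum_{i\in I_a}\bar m_i\Bigr)x_t<\Bigl(\sum_j\bar n_j\Bigr)x_t\le\sum_j\bar n_jx_j,
\]
which is a contradiction. (If $I_a=\emptyset$, the contradiction is simply $\sum\bar n_jx_j>0$.)

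\emph{Local realizability up to rank two.} A flat of the restricted arrangement of codimension $\le 2$ in $V_a^z$ is $X\cap V_a^z$ for a flat $X$ of the marked-braid arrangement of rank $\le2$. This uses Lemma~\ref{lem:primitive-generic}: $V_a^z$ is $L_2$-generic, so codimensions are preserved, and the hyperplanes containing $X\cap V_a^z$ are exactly the restrictions of those containing $X$. This needs the rank-$3$ genericity as well, to exclude extra hyperplanes through the intersection. The local system therefore consists of the forms $\epsilon_i\alpha_i$ for braid hyperplanes through $X$, restricted to $V_a^z$.

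I would show that this system has a solution on $V_a^z$. The original chain system restricted to the hyperplanes through $X$ is solvable in $\R^N$, since the chain itself is a solution. Moreover, its solution set is an open cone that is invariant under adding any vector of $X$, because each such form vanishes on $X$. So it suffices to find a solution $v$ and some $u\in X$ with $h(v+u)=0$, where $h$ is the Abel form. This is possible as soon as $h|_X\ne0$. If instead $h|_X=0$, then $X\subset V_a^z$, which contradicts genericity. This step has the same flavour as the proof of Lemma~\ref{lem:primitive-generic}: every moving block would have to have zero weight, which is impossible for rank $\le2$ when $t\ge4$. Alternatively, Gordan's alternative phrases this as saying that no nonnegative combination of the local forms equals a multiple of $h$, because such combinations are supported on at most two graph components.

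The main obstacle is this identification of local systems on $V_a^z$ with systems in $\R^N$ translated along $X$. Concretely, one must ensure that every codimension-$\le2$ flat of the restriction really comes from a rank-$\le2$ braid flat not contained in $V_a^z$, so that the translation argument applies. With $L_2$-genericity in hand, Theorem~\ref{thm:yoshinaga-half-space} gives that the complexified complement, i.e.\ the primitive tangent localization, is not a \Kpi.
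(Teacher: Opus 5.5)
Your proof is correct. The global non-realizability step is the paper's own argument: the inequalities $\sum_{I_a}\bar m_iy_i\le(\sum_{I_a}\bar m_i)x_t<(\sum_j\bar n_j)x_t<\sum_j\bar n_jx_j$, using $\bar m_a\ge1$. Your opening sentence misstates the signs of the Abel form (it has opposite signs on the two sides), and positivity alone is not the obstruction, but your displayed chain is right.

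For local realizability you take a genuinely different route. The paper stays inside the appendix. By Gordan's alternative, any nonrealizable subsystem $J$ has $h_a^z\in L_J$, and a count over the connected components of $G_J$ gives $\dim L_J\ge t$. This proves the stronger, quantitative fact that \emph{every} nonrealizable subsystem, not only those attached to flats, has restricted rank at least $t-1\ge3$.

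You instead import Lemma~\ref{lem:primitive-generic} and use a general principle. For an $L_2$-generic hyperplane $U$, any sign vector on $\mathcal A_U$ induced from an ambient chamber is automatically locally realizable up to rank two. The reason is that the local cone at a flat $X$ is invariant under translation by $X$, and $X$ is transverse to $U$.

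The step you flag as the main obstacle closes in two lines. Given a flat $Y$ of the restriction with $\operatorname{codim}_UY\le2$, let $X$ be the intersection of all braid hyperplanes containing $Y$. Then $X\cap U=Y$, and the restricted hyperplanes containing $Y$ are, by construction, exactly the restrictions of those containing $X$. If $X\subseteq U$, then $\operatorname{rank}X=\operatorname{codim}_UY+1\le3$, which contradicts genericity. Hence $X\not\subseteq U$, $\operatorname{rank}X=\operatorname{codim}_UY\le2$, and $h|_X\ne0$, as your translation argument needs.

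What your route buys is modularity. The same combinatorial genericity lemma then drives both the main-text proof and the appendix, and only the global obstruction depends on the chosen chamber. It also applies verbatim to the non-primitive case via Lemma~\ref{lem:nonprimitive-generic}: the sign vector there, braid signs together with $h_w>0$, is induced from the chamber of $\widehat{\mathcal A}_N$ given by the braid chamber and $u>0$ on $U_w$. The paper's route is self-contained and yields the sharper rank bound. Your aside that Gordan combinations are supported on at most two graph components is too loose to stand alone, but it is not needed.
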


\begin{proof}
Global realizability would imply
\[
 \sum_{j=1}^t \bar n_j x_j>
 \left(\sum_{j=1}^t \bar n_j\right)x_t,
 \qquad
 \sum_{i\in I_a}\bar m_i y_i\le
 \left(\sum_{i\in I_a}\bar m_i\right)x_t.
\]
Since $x_t>0$ and $\sum_j\bar n_j=\bar m_a+\sum_{i\in I_a}\bar m_i>\sum_{i\in I_a}\bar m_i$, this contradicts the defining equation of $V_a^z$.

We now show that every nonrealizable subsystem has rank at least $t-1$ after restriction to $V_a^z$. Let $J$ be a nonrealizable subsystem and let $L_J$ be the span, in the ambient dual space, of its signed marked-braid forms. Put
\[
 h_a^z=\sum_{j=1}^t \bar n_j x_j-
       \sum_{i\in I_a}\bar m_i y_i.
\]
By Gordan's alternative applied on $V_a^z$, a nontrivial nonnegative combination of the restrictions of the forms in $J$ is zero. Equivalently, a nonnegative combination of the ambient forms in $J$ equals a scalar multiple of $h_a^z$. This scalar is nonzero because all chosen marked-braid inequalities are simultaneously satisfied in the ambient chamber. Thus $h_a^z\in L_J$. Restricting to $V_a^z=\ker(h_a^z)$ therefore lowers the rank by one, so the restricted rank of the subsystem is $\dim L_J-1$.

Let $G_J$ be the graph of the marked-braid forms in $J$, and let $P=\{x_1,\ldots,x_t\}$. Since $h_a^z$ has nonzero coefficient at every coordinate and lies in $L_J$, all coordinate vertices occur in $G_J$. For a connected component $\Gamma$ of $G_J$, put $p(\Gamma)=|\Gamma\cap P|$. If $\Gamma$ contains the root, its incidence span has dimension at least $p(\Gamma)$. If $\Gamma$ is moving, then all forms in its incidence span have total coefficient sum zero on $\Gamma$. Since $L_J$ decomposes over the connected components of $G_J$ and $h_a^z\in L_J$, the sum of the coefficients of $h_a^z$ over the coordinates in $\Gamma$ is also zero; hence any moving component meeting $P$ also contains some $y_i$, and its incidence span has dimension $|\Gamma|-1\ge p(\Gamma)$. Summing over components gives
\[
 \dim L_J\ge \sum_\Gamma p(\Gamma)=t.
\]
Thus the restricted rank is at least $t-1\ge3$. Therefore every subsystem of rank at most two is realizable. Theorem~\ref{thm:yoshinaga-half-space} applies.
\end{proof}

If instead $s\ge4$, the same argument applies after interchanging zeros and poles, equivalently after replacing $\bar\mu$ by $-\bar\mu$. Thus Lemma~\ref{lem:yosh-primitive-local} supplies the primitive local obstruction in the full range needed for Proposition~\ref{prop:identity}.

\subsection*{The non-primitive local arrangement}

Let $w=(w_1,\ldots,w_N)$ be the moving coefficient vector of Section~\ref{sec:nonprimitive}. Put
\[
 Z=\{i:w_i>0\},\qquad P=\{i:w_i<0\},
\]
and
\[
 w_Z=\sum_{i\in Z}w_i,
 \qquad
 w_P=\sum_{j\in P}(-w_j).
\]
Since $\sum_{i=1}^N w_i\ne0$, we have $w_Z\ne w_P$. Let $D$ be the side $Z$ or $P$ with larger total absolute weight.

\begin{lemma}\label{lem:yosh-nonprimitive-local}
If $|D|\ge3$, then the complement $M(B(w)_\C)$ is not a \Kpi.
\end{lemma}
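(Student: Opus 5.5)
We apply Theorem~\ref{thm:yoshinaga-half-space} to the real arrangement $B(w)$ in $\R^N$. It is central of rank $N\ge|D|\ge3$, and essential, since the coordinate hyperplanes already cut out the origin. By zero-pole duality (replacing $w$ by $-w$, which does not change $B(w)$) we may assume $D=P$, so that $w_P>w_Z$. We need a sign vector that is locally realizable up to rank two but not globally realizable.

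\textbf{The sign vector.} Mimicking the primitive case, we order
\[
 x_{j_1}>\cdots>x_{j_p}>x_{i_1}>\cdots>x_{i_z}>0,\qquad P=\{j_\bullet\},\ Z=\{i_\bullet\},
\]
which fixes the signs on all marked-braid hyperplanes. On the extra hyperplane $\{h_w=0\}$ we impose
\[
 h'(x):=\sum_{j\in P}|w_j|x_j-\sum_{i\in Z}w_ix_i<0,
\]
that is, $h_w>0$. On this chamber we have $\sum_P|w_j|x_j> w_P x_{j_p}$ and $\sum_Z w_ix_i\le w_Z x_{j_p}$, so $h'>(w_P-w_Z)x_{j_p}>0$. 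The full system therefore has no solution, so the sign vector is not globally realizable. This is exactly the argument of Lemma~\ref{lem:yosh-primitive-local}, with the equation replaced by a strict inequality of the opposite sign.

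\textbf{Local realizability.} Let $J$ be a nonrealizable subsystem. Every braid subsystem is realizable, since the chamber is nonempty, so $J$ must contain $\{h'<0\}$. By Gordan's alternative there are $c_k\ge0$, not all zero, with $c_0(-h')+\sum c_k\ell_k=0$, where the $\ell_k$ are the signed braid forms of $J$. Necessarily $c_0>0$, because the braid forms are simultaneously positive at a point. Hence $h'$ lies in the span $L_J$ of the braid forms of $J$, and the rank of $J$ equals $\dim L_J$, the form $h'$ adding nothing new.

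Now we run the graph count of Lemma~\ref{lem:yosh-primitive-local} on the components $\Gamma$ of $G_J$. Every coordinate vertex appears, because every coefficient of $h'$ is nonzero. A rooted component has span dimension $|\Gamma|-1\ge p(\Gamma)$, where the root is counted and $p(\Gamma)=|\Gamma\cap P|$. On a moving component the coefficients of $h'$ must sum to zero, so that component contains both signs and again has span dimension $|\Gamma|-1\ge p(\Gamma)$. Summing over components gives $\operatorname{rank}J\ge|P|\ge3$.

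Consequently every subsystem of rank at most two is realizable, and Theorem~\ref{thm:yoshinaga-half-space} shows that $M(B(w)_\C)$ is not a \Kpi.

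The delicate point is the non-global-realizability inequality: it is where the choice of $D$ as the heavier side enters. The comparison $w_P>w_Z$ is what makes $h'$ strictly positive on the braid chamber, while the strict inequality $x_{j_p}>0$ handles the boundary case. In the rank count, the only care needed is to treat the rooted component's span dimension correctly.
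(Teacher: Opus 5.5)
Your proposal is correct and is essentially the paper's proof. It uses the same chamber (all $P$-coordinates above all $Z$-coordinates above $0$) with the Abel sign flipped to $h_w>0$, the same Gordan argument forcing $h_w\in L_J$, and the same component-by-component rank count giving $\operatorname{rank}J\ge|D|\ge3$.
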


\begin{proof}
Assume first that $w_P>w_Z$, so $D=P$; the other case is obtained by replacing $w$ by $-w$, which leaves $B(w)$ unchanged. Choose a total order
\[
 x_{p_1}>x_{p_2}>\cdots>x_{p_{|P|}}>
 x_{z_1}>\cdots>x_{z_{|Z|}}>0.
\]
If one side is empty, the corresponding part of the chain is omitted.
Take the marked-braid signs determined by this chamber, and for the Abel hyperplane $h_w=0$, choose the opposite sign $h_w>0$. In the displayed chamber one has $h_w<0$ throughout: the positive contribution is at most $w_Z x_{p_{|P|}}$, while the negative contribution is at least $w_P x_{p_{|P|}}$, and $w_P>w_Z$. Hence the full system is not globally realizable.

We show that every nonrealizable subsystem has rank at least $|D|$. A subsystem not containing $h_w>0$ is part of a marked braid chamber and is realizable. Hence any nonrealizable subsystem contains $h_w>0$. Let $J$ be the marked-braid part of such a subsystem, with signed defining forms $f_e$, and let $L_J=\Span\{f_e:e\in J\}$. By Gordan's alternative and realizability of the marked-braid part alone, there is a relation
\[
 c_0h_w+
 \sum_{e\in J}c_ef_e=0,
 \qquad c_0>0,
 \qquad c_e\ge0,
\]
so $h_w\in L_J$.

Let $G_J$ be the graph of the marked-braid forms in $J$. Since $h_w$ has nonzero coefficient at every coordinate and lies in $L_J$, every coordinate vertex occurs in $G_J$. For a connected component $\Gamma$ of $G_J$, put $d(\Gamma)=|\Gamma\cap D|$. If $\Gamma$ contains the root, its incidence span has dimension at least $d(\Gamma)$. If $\Gamma$ is moving, every form in its incidence span has total coefficient sum zero on $\Gamma$. Since $L_J$ decomposes over the connected components of $G_J$ and $h_w\in L_J$, the sum of the coefficients of $h_w$ over the coordinates in $\Gamma$ is also zero. Thus any moving component meeting $D$ must also meet the opposite side, and its incidence span has dimension $|\Gamma|-1\ge d(\Gamma)$. Therefore
\[
 \dim L_J\ge \sum_\Gamma d(\Gamma)=|D|.
\]
Adding the Abel form does not increase the rank because $h_w\in L_J$. Thus every nonrealizable subsystem has rank at least $|D|\ge3$, and the sign vector is locally realizable up to rank two. Theorem~\ref{thm:yoshinaga-half-space} applies.
\end{proof}

For the non-primitive components appearing in Theorem~\ref{thm:main}, the dominant-side hypothesis used in Lemma~\ref{lem:yosh-nonprimitive-local} is automatic after a suitable choice of base point. If $t\ge3$, choose a zero base point. Then the negative side is dominant, since
\[
 \sum_{j=1}^t\bar n_j
 =\bar m_a+
  \sum_{i\ne a}\bar m_i
 >
  \sum_{i\ne a}\bar m_i,
\]
and it has cardinality $t$. If $s\ge3$, choose a pole base point, or equivalently apply zero-pole duality. Thus Lemma~\ref{lem:yosh-nonprimitive-local} supplies the non-primitive local obstruction in the range needed for Proposition~\ref{prop:nonzero}. Combined with the tangent-localization and transfer arguments of Sections~\ref{sec:primitive} and~\ref{sec:nonprimitive}, Lemmas~\ref{lem:yosh-primitive-local} and~\ref{lem:yosh-nonprimitive-local} give an alternative proof of Theorem~\ref{thm:main}.

\bibliographystyle{alpha}
\bibliography{biblio}

\end{document}